\documentclass[10pt]{amsart}
\usepackage{geometry}
\geometry{a4paper}
\usepackage{graphicx}
\usepackage{amssymb}
\usepackage{epstopdf}
\usepackage[colorlinks]{hyperref}
\usepackage[lite]{amsrefs}
\usepackage{mathpazo}
\usepackage{amsmath}
\usepackage{mathrsfs}
\usepackage{extarrows}
\usepackage{cleveref}
\newtheorem{theorem}{Theorem}[section]
\newtheorem{corollary}[theorem]{Corollary}
\newtheorem{lemma}[theorem]{Lemma}
\newtheorem{proposition}[theorem]{Proposition}
\theoremstyle{definition}
\newtheorem{definition}[theorem]{Definition}
\newtheorem{remark}[theorem]{Remark}

\numberwithin{equation}{section}

\usepackage{tikz-cd}
\usepackage{tikz}
\usetikzlibrary{knots}
\usetikzlibrary{calc}
\usepgfmodule{decorations}
\usetikzlibrary{decorations.markings}
\usetikzlibrary{hobby}

\title{The necessity of (co)unit in nearly Frobenius algebra}
\author{Zhiyun Cheng}
\address{School of Mathematical Sciences, Beijing Normal University, Beijing 100875, China}
\email{czy@bnu.edu.cn}
\author{Ziyi Lei}
\address{School of Mathematical Sciences, Beijing Normal University, Beijing 100875, China}
\email{202121130067@mail.bnu.edu.cn}
\subjclass[2020]{16T10, 16S10, 57K18}
\keywords{Frobenius algebra, TQFT, Khovanov homology}
\begin{document}
\begin{abstract}
In this article, we concern the concept of nearly Frobenius algebra, which corresponds to most 2D-TQFT of which each cobordism admits no critical points of index 0 or 2. We prove that any nearly Frobenius algebra over a principal ideal domain with surjective multiplication and injective comultiplication is indeed a Frobenius algebra. The motivation of this study mainly emanates from the investigation of potential constructions of link homology.
\end{abstract}
\maketitle

\section{Introduction}\label{section1}
A topological quantum field theory (TQFT) is a quantum field theory which appears in the description of physical systems such as quantum gravity and quantum Hall effect. It was first formulated by Witten in \cite{Wit1988} and soon later an axiomatic approach to it was proposed by Atiyah in \cite{Ati1989}. Roughly speaking, an $n$-dimensional TQFT ($n$D-TQFT) is a symmetric monoidal functor from the cobordism category $\textbf{nCob}$ to the category $\textbf{Veck}_{\mathbb{K}}$, where $\textbf{Veck}_{\mathbb{K}}$ denotes the symmetric monoidal category of $\mathbb{K}$-vector spaces equipped with tensor product. When $n=2$, 2D-TQFTs have a comparatively simple classification: there exists a canonical equivalence between the category of 2-dimensional TQFTs and the category of commutative Frobenius algebras \cite{Abr1996, Kock2004}.

In this article, we concern a specific class of 2D-TQFTs, where each cobordism admits no \emph{birth cobordism} or \emph{death cobordism}. Equivalently speaking, there is no critical points of index 0 or 2 in each cobordism. This kind of 2D-TQFT is called an \emph{almost 2D-TQFT} and the corresponding Frobenius algebra is named as \emph{nearly Frobenius algebra} \cite{GLSU2019}. In general, it is not necessary that a nearly Frobenius algebra must have a unit or counit. However, as the main result of this article, we prove the following result.

\begin{theorem}\label{theorem1.1}
Any 2-dimensional nearly Frobenius algebra over a principal ideal domain with surjective multiplication and injective comultiplication is a Frobenius algebra.
\end{theorem}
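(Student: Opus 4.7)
The plan is to construct the unit $1_A \in A$ and the counit $\epsilon : A \to R$ directly, exploiting the interplay between the Frobenius relation and the one-sided hypotheses on $\mu$ and $\Delta$. The central observation is that by the Frobenius identity, the composite $\Delta \circ \mu : A \otimes A \to A \otimes A$ equals both $(\mu \otimes \mathrm{id}) \circ (\mathrm{id} \otimes \Delta)$ and $(\mathrm{id} \otimes \mu) \circ (\Delta \otimes \mathrm{id})$. Since $\mu$ is surjective and $\Delta$ is injective, this composite factors as $A \otimes A \twoheadrightarrow A \hookrightarrow A \otimes A$, exhibiting $A$ simultaneously as a quotient and a submodule of $A \otimes A$ in a manner compatible with the two Frobenius expressions. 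This bidirectional factorization is the skeleton on which the proof rests.

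To produce the unit, I would first use surjectivity of $\mu$ to choose an $R$-linear section $\sigma : A \to A \otimes A$ with $\mu \circ \sigma = \mathrm{id}_A$. Applying $\Delta$ and invoking the Frobenius relation gives the identity $\Delta(a) = (\mu \otimes \mathrm{id}) \circ (\mathrm{id} \otimes \Delta) \circ \sigma(a)$ for every $a$. Analyzing how the right-hand side depends on the auxiliary choice of $\sigma$, and using injectivity of $\Delta$ to cancel terms, one should extract a distinguished element $1_A \in A$ such that $\mu(1_A \otimes a) = \mu(a \otimes 1_A) = a$. The counit is built dually: injectivity of $\Delta$ (after establishing that its image is a direct summand, see below) yields a retraction $r : A \otimes A \to A$, from which one extracts an $R$-linear form $\epsilon : A \to R$ by an analogous use of the Frobenius relation. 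The required axioms such as $(\epsilon \otimes \mathrm{id}) \circ \Delta = \mathrm{id}_A$ then follow from coassociativity combined with the construction.

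The main obstacle I anticipate is upgrading bare injectivity of $\Delta$ (respectively surjectivity of $\mu$) to a splitting at the level of $R$-modules. Over a field this is automatic, whereas over a general ring it typically fails (for instance $\mathbb{Z}\xrightarrow{2}\mathbb{Z}$ is injective but not split). The PID hypothesis, via the structure theorem for finitely generated modules, is what allows one to produce such splittings, and the Frobenius relation together with surjectivity of $\mu$ should be used to show that no torsion obstruction survives in the cokernel of $\Delta$. Concretely, I would first run the field-level argument after tensoring with $K = \mathrm{Frac}(R)$, obtaining a canonical unit $1_K$ and counit $\epsilon_K$ over $K$; the delicate step is then to show, using the PID structure and the Frobenius compatibility, that $1_K$ lies in $A \subset A_K$ and that $\epsilon_K$ restricts to an $R$-linear map $A \to R$. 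Once integrality is in hand, the Frobenius axioms transfer from $A_K$ back to $A$ by the injectivity of $A \to A_K$.
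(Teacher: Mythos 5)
There is a genuine gap at the heart of your argument: the step where you ``extract a distinguished element $1_A$'' from the identity $\Delta(a)=(\mu\otimes\mathrm{id})\circ(\mathrm{id}\otimes\Delta)\circ\sigma(a)$ is asserted rather than proved, and no mechanism is offered for why varying the section $\sigma$ and cancelling against the injective $\Delta$ should isolate a two-sided unit. This is not a routine verification to be filled in later --- it is the entire content of the theorem. A telling symptom is that your outline never uses the hypothesis $\dim A=2$, whereas the paper's proof depends on it essentially: the unit is produced by Theorem~\ref{theorem1.2}, whose proof is a complete classification of rank-$2$ commutative associative algebras over a field (in all characteristics, Theorem~\ref{Theorem in EM2017}, Proposition~\ref{The further proposition with characteristic other than 2}, Theorem~\ref{The classification with characteristic being 2}), followed by a case-by-case check showing that surjectivity of $m$ eliminates every non-unital isomorphism class. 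The authors explicitly remark that they do not know whether the rank-$2$ hypothesis can be dropped, so a dimension-free derivation of the unit from the Frobenius relation alone would be a substantially stronger result and cannot be waved through. Note also that the paper does not use the Frobenius relation or $\Delta$ at all to get the unit --- only surjectivity of $m$ --- and obtains the counit by dualizing ($\Delta$ injective $\Rightarrow$ $\Delta^*$ surjective, then apply Theorem~\ref{theorem1.2} to $(A^*,\Delta^*)$), rather than by your retraction construction, which is likewise unsubstantiated.

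On the positive side, your final paragraph correctly anticipates the architecture of the paper's descent step: one first proves the field-coefficient statement (Theorem~\ref{Unital theorem}) over $K=\mathrm{Frac}(R)$ and then shows the resulting unit $1_{\overline A}$ actually lies in $A$. The paper does this by writing $a\cdot 1_{\overline A}=r_1e_1+r_2e_2$ with $(a,r_1,r_2)$ coprime, using B\'ezout to change basis to $\{a\cdot 1_{\overline A},\,x\}$, and then exploiting surjectivity of $m$ \emph{over $R$} (not merely over $K$) to force $a$ to be a unit of $R$. Your proposal identifies the right obstruction (non-split injections over a PID, integrality of $1_K$) but supplies neither the field-level existence proof nor the coprimality/B\'ezout computation, so both halves of the argument remain to be done.
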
 

The key ingredient of the proof of Theorem \ref{theorem1.1} is a detailed analysis of 2-dimensional commutative associative algebras. There are several papers which concerns the question of classification of 2-dimensional algebras, see \cite{EM2017, HUI2020, JJJ2007}. Based on these results, we discuss when a 2-dimensional commutative algebra over a field is associative and unital and then extend the result to any principal ideal domain (PID) coefficients. The key result can be summarised as follows.

\begin{theorem}\label{theorem1.2}
Any 2-dimensional commutative associative algebra over a PID with surjective multiplication is unital.
\end{theorem}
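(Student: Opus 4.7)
The plan is to apply the Cayley-Hamilton identity to the left-multiplication operators $L_a\in\operatorname{End}_R(A)$ for suitable $a\in A$, and to convert the surjectivity of $\mu$ into a Bezout relation on the $2\times 2$ minors of the structure matrix.

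I would first fix a basis $e_1,e_2$ of $A$ (interpreting ``$2$-dimensional over a PID'' as free of rank $2$, which is the setting relevant to the paper), and organize the multiplication into the $2\times 3$ structure matrix $C$ whose columns encode $e_1^2$, $e_1e_2$, $e_2^2$ in the basis $\{e_1,e_2\}$. Let $\Delta_{12},\Delta_{13},\Delta_{23}\in R$ be the three $2\times 2$ minors of $C$. By the Smith normal form for a $2\times 3$ matrix over a PID, the image of $\mu\colon A\otimes A\to A$ equals $A$ exactly when $\gcd(\Delta_{12},\Delta_{13},\Delta_{23})$ is a unit, so the hypothesis on $\mu$ is precisely a Bezout relation $\alpha\Delta_{12}+\beta\Delta_{13}+\gamma\Delta_{23}=1$ for some $\alpha,\beta,\gamma\in R$.

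Next I would invoke associativity. Because $A$ is commutative and associative, the left-multiplication map $L\colon A\to \operatorname{End}_R(A)\cong M_2(R)$, $a\mapsto L_a$, is an $R$-algebra homomorphism, so $L_{a^2}=L_a^2$ for every $a\in A$. The $2\times 2$ Cayley-Hamilton identity rearranges to
\begin{equation*}
  \det(L_a)\,I \;=\; \operatorname{tr}(L_a)\,L_a - L_{a^2} \;=\; L_{\operatorname{tr}(L_a)\,a - a^2}.
\end{equation*}
Hence the subset $S=\{\lambda\in R : \lambda I\in \operatorname{Im}(L)\}$ is an ideal of $R$ that contains $\det(L_a)$ for every $a\in A$. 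Testing $a=e_1$, $a=e_2$, and $a=e_1+e_2$, a direct computation identifies the three determinants as $\Delta_{12}$, $\Delta_{23}$, and $\Delta_{12}+\Delta_{13}+\Delta_{23}$, respectively, so $S$ contains all three minors. The Bezout relation above then forces $1\in S$, producing $u\in A$ with $L_u=I$; by commutativity $u$ is a two-sided unit, and $A$ is unital.

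The main obstacle I expect is the identification of the ``middle'' minor $\Delta_{13}$, which does not appear as the determinant of any individual $L_{e_i}$ but only emerges via the cross term in $\det(L_{e_1+e_2})$; this is what forces one to feed three test elements, not two, into Cayley-Hamilton. A more routine secondary point is the Smith-normal-form translation of surjectivity of $\mu$ into the $\gcd$ condition on the minors. Once these are in place the argument is uniform in the PID and sidesteps the sort of casework on idempotents or residue fields that a direct classification approach, as hinted in the excerpt, would otherwise require.
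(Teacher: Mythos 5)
Your proof is correct, and it takes a genuinely different route from the paper's. The paper proceeds by classification: it lists all rank-$2$ commutative associative algebras over a field (quoting Remm--Goze for characteristic $\neq 2$ and working out the characteristic-$2$ case from scratch), checks unitality case by case to get the field-coefficient statement, and then descends from the fraction field to the PID by writing the identity of $\overline{A}$ as $\frac{1}{a}(r_1e_1+r_2e_2)$ and using surjectivity of $m$ to force $a$ to be a unit. Your argument replaces all of this with two uniform ingredients: the criterion that a map $R^3\to R^2$ given by the structure matrix is onto iff the ideal of $2\times 2$ minors is all of $R$, and the Cayley--Hamilton identity applied to the regular representation $a\mapsto L_a$, which exhibits $\det(L_a)I$ in the image of $L$; feeding in $e_1$, $e_2$, $e_1+e_2$ recovers $\Delta_{12}$, $\Delta_{23}$, $\Delta_{12}+\Delta_{13}+\Delta_{23}$ (the last by bilinearity of the determinant in the columns), so the ideal $S=\{\lambda: \lambda I\in\operatorname{Im}L\}$ contains all three minors and hence $1$. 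All steps check out: $L$ is an $R$-algebra homomorphism precisely because $A$ is associative, $\operatorname{Im}L$ is an $R$-submodule so $S$ is an ideal, and a left identity is two-sided by commutativity. What your approach buys is brevity, no characteristic split, no field-to-PID descent, and in fact more generality --- the forward implication ``$\mu$ surjective $\Rightarrow$ the maximal minors generate the unit ideal'' holds over any commutative ring (a surjection onto a free module splits, and Cauchy--Binet applied to $CB=I$ gives the Bezout relation), so your proof establishes the theorem for $A$ free of rank $2$ over an arbitrary commutative ring. What it does not give is the explicit classification of the non-unital and non-associative cases, which the paper develops as a result of independent interest and reuses elsewhere.
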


The reader may feel curious about the restrictive conditions such as 2-dimensional algebra and surjective multiplication mentioned above. The motivation comes from the construction of link homology. In \cite{Kho2000}, Khovanov introduced a bigraded homology group to oriented links in $\mathbb{R}^3$ such that the Euler characteristic is equal to the Jones polynomial. To define the Khovanov homology, one needs to consider all the resolutions of a link diagram and connect adjacent resolutions by cobordisms. Restrict our eyesight to the case that uses 0, 1-resolutions and 2D-TQFTs, such a theory is decided by the choice of the corresponding Frobenius algebra. Khovanov's original homology is based on the Frobenius algebra $\mathbb{Z}[1,x]/(x^2)$ with trace $\epsilon: 1\mapsto 0,x\mapsto 1$.  Soon after Khovanov's seminal work, Lee constructed the first variant of Khovanov homology by using $\mathbb{Z}[1,x]/(x^2-1)$ with the same trace \cite{Lee2005}. Although Lee homology contains only the information of number of components of the link, it possesses a filtration which was later used by Rasmussen to give a lower bound for slice genus and an alternative proof of Milnor conjecture \cite{Ra2010}. Comparing with the first proof given by Kronheimer and Mrowka by using gauge theory \cite{KM1993}, Rasmussen's proof is combinatorial. In \cite{Kho2006}, Khovanov showed that any such link homology can be realized by the Frobenius algebra $A_5=\mathbb{Z}[h,t]/(x^2-hx-t)$ with the same trace.

However, in the construction of Khovanov-type homology, only the multiplication and comultiplication get used. Hence a natural question arises: if a Frobenius algebra contains no unit or counit, whether it can also be used to define a link homology? On the aspect of 2D-TQFT, it means to erase the birth and death cobordisms from the generators. As a corollary of Theorem \ref{theorem1.1}, we unfortunately find that if such a nearly Frobenius algebra can be used to define a link homology, the unit and counit must exist. This statement still holds even if we extend the coefficients from fields to principal ideal domains.

This article is arranged as follows. In Section \ref{almost 2D-TQFT}, we introduce the notion of almost 2D-TQFT and explore the corresponding algebraic structure, the nearly Frobenius algebra. In Section \ref{Rank-$2$ algebras}, we provide a detailed analysis of 2-dimensional commutative algebras and prove the existence of the unit provided that the multiplication is surjective. Section \ref{section4} is devoted to discuss the case of noncommutative algebras. In Section \ref{Homology induced by almost 2D-TQFT}, we explain how an almost 2D-TQFT induce a diagram homology, review Khovanov's universal theory briefly, and finally deduce that any almost 2D-TQFT over PID that defines a link homology can be covered by $A_5$.

\section{almost 2D-TQFT and nearly Frobenius algebra}\label{almost 2D-TQFT}
Recall that a $2$D-TQFT is a functor from the category of $2$-dimensional cobordisms to the tensor (monoidal) category whose objects are the tensor products of finitely many fixed modules \cite{Kock2004}, and these theories are one-to-one corresponding to commutative Frobenius algebras. In this section, we first review the concept of almost 2D-TQFT, which, similar to a 2D-TQFT, can be regarded as a functor from a subcategory of cobordisms to the tensor category of fixed modules.
 
\subsection{The definition}
Before we put forward the definition of almost 2D-TQFT, we need to clarify which category it acts on first.

\begin{definition}
The \emph{nearly $2$-dimensional cobordism category} is a category whose objects are non-empty 1-dimensional oriented manifolds and morphisms are oriented cobordisms with each component has no critical points of index 0 or 2, quotient by diffeomorphisms relative to boundaries.
\end{definition}

As a subcategory of the $2$-dimensional cobordism category $\textbf{2Cob}$, let us use $\textbf{N2Cob}$ to denote the nearly 2-dimensional cobordism category. Then the category $\textbf{N2Cob}$ can be regarded as the category obtained from $\textbf{2Cob}$ by forsaking the empty objects and the morphisms containing components with locally maximal point or minimal point. For example, in Figure \ref{fig:Examples of morphisms}, the first cobordism is a morphism of $\textbf{N2Cob}$ but the second and the third one are not. In fact, the morphisms in $\mathrm{Mor}(\textbf{N2Cob})$ are the disjoint union of some cobordisms as the first one, which consists of pants and cylinders with non-empty boundaries on each side.

\begin{figure}[htbp]
    \centering
    \begin{tikzpicture}[scale=0.6, use Hobby shortcut]
      \draw (0,1) ellipse (0.2 and 0.5);
      \draw (0,-1) ellipse (0.2 and 0.5);
      \node at (0,-1.85) {$\vdots$};
      \draw (0,-3) ellipse (0.2 and 0.5);
      \draw (0,1.5)..++(0.01,0)..(2,0.5)..++(0.01,0);
      \draw (0,-1.5)..++(0.01,0)..(2,-0.5)..++(0.01,0);
      \draw (0,-2.5)..++(0.01,0)..(2,-1.5)..++(0.01,0);
      \draw (0,-3.5)..++(0.01,0)..(2,-2.5)..++(0.01,0);
      \draw (0,-0.5) arc (-90:90:0.7 and 0.5);
      \node at (2,-0.85) {$\vdots$};
      \draw (3-0.1,-1+0.1)..(3,-1)..++(0.01,-0.01)..(3.5,-1.2)..(4,-1)..++(0.01,0.01)..++(0.09,0.09);
      \draw (3,-1)..++(0.01,0.01)..(3.5,-0.8)..(4,-1)..++(0.01,-0.01);
      \node at (5,-1) {$\cdots$};
      \draw (6-0.1,-1+0.1)..(6,-1)..++(0.01,-0.01)..(6.5,-1.2)..(7,-1)..++(0.01,0.01)..++(0.09,0.09);
      \draw (6,-1)..++(0.01,0.01)..(6.5,-0.8)..(7,-1)..++(0.01,-0.01);
      \draw (10,1) ellipse (0.2 and 0.5);
      \draw (10,-1) ellipse (0.2 and 0.5);
      \node at (10,-1.85) {$\vdots$};
      \draw (10,-3) ellipse (0.2 and 0.5);
      \draw (2,0.5)--(8,0.5) (2,-2.5)--(8,-2.5);
      \draw (8,0.5)..++(0.01,0)..(10,1.5)..++(0.01,0);
      \draw (8,-0.5)..++(0.01,0)..(10,-1.5)..++(0.01,0);
      \draw (8,-1.5)..++(0.01,0)..(10,-2.5)..++(0.01,0);
      \draw (8,-2.5)..++(0.01,0)..(10,-3.5)..++(0.01,0);
      \node at (8,-0.85) {$\vdots$};
      \draw (10,0.5) arc (90:270:0.7 and 0.5);
    \end{tikzpicture}
    \quad
    \begin{tikzpicture}[scale=0.9, use Hobby shortcut]
      \draw (15,1.5)..++(-0.01,0)..(12,-0.5)..++(-0.01,0);
      \draw (15,-1.5)--(12,-1.5);
      \draw (15,0.5) arc (90:270:0.7 and 0.5);
      \draw (12,0.5) arc (-90:90:0.7 and 0.5);
      \draw (15,1) ellipse (0.2 and 0.5);
      \draw (15,-1) ellipse (0.2 and 0.5);
      \draw (12,-1) ellipse (0.2 and 0.5);
      \draw (12,1) ellipse (0.2 and 0.5);
    \end{tikzpicture}
    \quad \quad 
    \begin{tikzpicture}[scale=0.9, use Hobby shortcut]
        \draw (15,1.5) arc (90:270:2.1 and 1.5);
        \draw (15,0.5) arc (90:270:0.7 and 0.5);
        \draw (15,1) ellipse (0.2 and 0.5);
        \draw (15,-1) ellipse (0.2 and 0.5);
      \end{tikzpicture}
    \caption{Examples of morphisms in $\textbf{2Cob}$}
    \label{fig:Examples of morphisms}
\end{figure}
\par

Now we can define what an almost 2D-TQFT is.
\begin{definition}
Let $R$ be a commutative ring with identity element. An \emph{almost 2D-TQFT} is a symmetric monoidal functor from the category $\textbf{N2Cob}$ to the tensor category of $R$-module $A$. It maps each object of $\textbf{N2Cob}$, say $\sqcup_mS^1$, to $\otimes_mA$ and maps each cobordism to a linear $R$-homomorphism, where disjoint unions of cobordisms are mapped to the tensor products of the image of each connected component.
\end{definition}

\subsection{Relative algebraic structure}
Recall that the morphisms of a \textbf{2Cob} can be generated by six basic cobordisms, the birth and death of one circle, the pant that sends one circle to twos, the pant that merges two circles into one, the cylinder of one circle, and the permutation of two circles, see Figure \ref{fig: Basic cobordisms and corresponding operators}. In the case of \textbf{N2Cob}, the necessary basic cobordisms are pairs of pants, cylinders and permutations. This is because each connected component of the morphisms in $\mathrm{Mor}(\textbf{N2Cob})$, up to permutations, is as shown by the first picture of Figure \ref{fig:Examples of morphisms}, which can be decomposed into pairs of pants and cylinders. Note that a permutation is just a disjoint union of two cylinders.

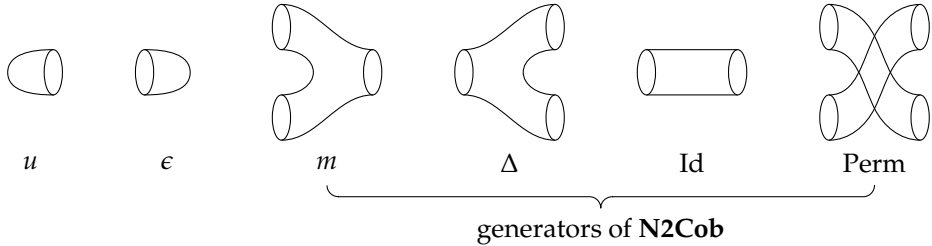
\begin{figure}[htbp]
  \center 
  \begin{tikzpicture}[scale=0.6, use Hobby shortcut]
    \draw (1,0.5)..++(-0.01,0)..(0,0)..++(0,-0.01)..(1,-0.5)..++(0.01,0);
    \draw (1,0) ellipse (0.2 and 0.5);
    \node at (0.5,-2) {$u$};
    \draw (3,0.5)..++(0.01,0)..(4,0)..++(0,-0.01)..(3,-0.5)..++(-0.01,0);
    \draw (3,0) ellipse (0.2 and 0.5);
    \node at (3.5,-2) {$\epsilon$};
    \draw (6+2+4,1.5)..++(-0.01,0)..(4+2+4,0.5)..++(-0.01,0);
    \draw (6+2+4,-1.5)..++(-0.01,0)..(4+2+4,-0.5)..++(-0.01,0);
    \draw (6+2+4,0.5) arc (90:270:0.7 and 0.5);
    \draw (6+2+4,1) ellipse (0.2 and 0.5);
    \draw (6+2+4,-1) ellipse (0.2 and 0.5);
    \draw (4+2+4,0) ellipse (0.2 and 0.5);
    \node at (5+2+4,-2) {$\Delta$};
    \draw (10-4,1.5)..++(0.01,0)..(12-4,0.5)..++(0.01,0);
    \draw (10-4,-1.5)..++(0.01,0)..(12-4,-0.5)..++(0.01,0);
    \draw (10-4,-0.5) arc (-90:90:0.7 and 0.5);
    \draw (10-4,1) ellipse (0.2 and 0.5);
    \draw (10-4,-1) ellipse (0.2 and 0.5);
    \draw (12-4,0) ellipse (0.2 and 0.5);
    \node at (11-4,-2) {$m$};
    \draw (16-2,0.5)..(18-2,0.5) (16-2,-0.5)..(18-2,-0.5);
    \draw (18-2,0) ellipse (0.2 and 0.5);
    \draw (16-2,0) ellipse (0.2 and 0.5);
    \node at (17-2,-2) {$\mathrm{Id}$};
    \draw (18,1) ellipse (0.2 and 0.5);
    \draw (18,-1) ellipse (0.2 and 0.5);
    \draw (20,1) ellipse (0.2 and 0.5);
    \draw (20,-1) ellipse (0.2 and 0.5);
    \draw (18,1.5)..++(0.01,0)..(19,0.8)..(20,-0.5)..++(0.01,0);
    \draw (18,0.5)..++(0.01,0)..(19,-0.8)..(20,-1.5)..++(0.01,0);
    \draw (18,-0.5)..++(0.01,0)..(19,0.8)..(20,1.5)..++(0.01,0);
    \draw (18,-1.5)..++(0.01,0)..(19,-0.8)..(20,0.5)..++(0.01,0);
    \node at (19,-2) {$\mathrm{Perm}$};
    \draw (7,-2.5)..++(0,-0.01)..(7.25,-2.75)..++(0.01,0) (7.25,-2.75)..(12.75,-2.75) (12.75,-2.75)..++(0.01,0)..(13,-3)..++(0,-0.01) (13,-3)..++(0,0.01)..(13.25,-2.75)..++(0.01,0) (13.25,-2.75)..(18.75,-2.75) (18.75,-2.75)..++(0.01,0)..(19,-2.5)..++(0,0.01);
    \node at (13,-3.5) {generators of $\textbf{N2Cob}$};
  \end{tikzpicture}
  \caption{Basic cobordisms and corresponding operators}
  \label{fig: Basic cobordisms and corresponding operators}
\end{figure}

Under the action of almost 2D-TQFT, these four basic cobordisms $m, \Delta, \mathrm{Id}, \mathrm{Perm}$ become four linear $R$-homomorphisms between tensor products of the $R$-module $A$. The pant merging two circles into one becomes the multiplication $m: A\otimes A\to A$, the pant sending one circle to twos becomes the comultiplication $\Delta: A\to A\otimes A$, the annulus becomes the identity map $\mathrm{Id}: A\to A$ and the permutation becomes the map $\mathrm{Perm}: A\otimes A\to A\otimes A, \sum x_i\otimes y_i\mapsto\sum y_i\otimes x_i$. As a subcategory of $\textbf{2Cob}$, up to permutations, the category $\textbf{N2Cob}$ inherits most important properties of $\textbf{2Cob}$, which are illustrated in Figure \ref{fig:Diffeomorphism cobordisms}.

\begin{figure}
  \begin{tikzpicture}[scale=0.6, use Hobby shortcut, baseline=-20pt]
    \draw (0,1) ellipse (0.2 and 0.5);
    \draw (0,-1) ellipse (0.2 and 0.5);
    \draw (0,-3) ellipse (0.2 and 0.5);
    \draw (0,1.5)..++(0.01,0)..(2,0.5)..++(0.01,0);
    \draw (0,-1.5)..++(0.01,0)..(2,-0.5)..++(0.01,0);
    \draw (0,-2.5)..++(0.01,0)..(2,-1.5)..++(0.01,0);
    \draw (0,-3.5)..++(0.01,0)..(2,-2.5)..++(0.01,0);
    \draw (0,-0.5) arc (-90:90:0.7 and 0.5);
    \draw (2,0) ellipse (0.2 and 0.5);
    \draw (2,-2) ellipse (0.2 and 0.5);
    \draw (2,-1.5) arc (-90:90:0.7 and 0.5);
    \draw (2,0.5)..++(0.01,0)..(4,-0.5)..++(0.01,0);
    \draw (2,-2.5)..++(0.01,0)..(4,-1.5)..++(0.01,0);
    \draw (4,-1) ellipse (0.2 and 0.5);
  \end{tikzpicture}
  \quad $\xlongequal{\textbf{diff.}}$ \quad
  \begin{tikzpicture}[scale=0.6, use Hobby shortcut, baseline=-20pt]
    \draw (0,1) ellipse (0.2 and 0.5);
    \draw (0,-1) ellipse (0.2 and 0.5);
    \draw (0,-3) ellipse (0.2 and 0.5);
    \draw (0,1.5)..++(0.01,0)..(2,0.5)..++(0.01,0);
    \draw (0,0.5)..++(0.01,0)..(2,-0.5)..++(0.01,0);
    \draw (0,-0.5)..++(0.01,0)..(2,-1.5)..++(0.01,0);
    \draw (0,-3.5)..++(0.01,0)..(2,-2.5)..++(0.01,0);
    \draw (0,-2.5) arc (-90:90:0.7 and 0.5);
    \draw (2,0) ellipse (0.2 and 0.5);
    \draw (2,-2) ellipse (0.2 and 0.5);
    \draw (2,-1.5) arc (-90:90:0.7 and 0.5);
    \draw (2,0.5)..++(0.01,0)..(4,-0.5)..++(0.01,0);
    \draw (2,-2.5)..++(0.01,0)..(4,-1.5)..++(0.01,0);
    \draw (4,-1) ellipse (0.2 and 0.5);
  \end{tikzpicture}
  \quad\quad 
  \begin{tikzpicture}[scale=0.6, use Hobby shortcut, baseline=-5pt]
    \draw (0,1) ellipse (0.2 and 0.5);
    \draw (0,-1) ellipse (0.2 and 0.5);
    \draw (0,1.5)..++(0.01,0)..(1,0.8)..(2,-0.5)..++(0.01,0);
    \draw (0,0.5)..++(0.01,0)..(1,-0.8)..(2,-1.5)..++(0.01,0);
    \draw (0,-1.5)..++(0.01,0)..(1,-0.8)..(2,0.5)..++(0.01,0);
    \draw (0,-0.5)..++(0.01,0)..(1,0.8)..(2,1.5)..++(0.01,0);
    \draw (2,1) ellipse (0.2 and 0.5);
    \draw (2,-0.5) arc (-90:90:0.7 and 0.5);
    \draw (2,-1) ellipse (0.2 and 0.5);
    \draw (2,1.5)..++(0.01,0)..(4,0.5)..++(0.01,0);
    \draw (2,-1.5)..++(0.01,0)..(4,-0.5)..++(0.01,0);
    \draw (4,0) ellipse (0.2 and 0.5);
  \end{tikzpicture}
  \quad $\xlongequal{\textbf{diff.}}$ \quad
  \begin{tikzpicture}[scale=0.6, use Hobby shortcut, baseline=-5pt]
    \draw (2,1) ellipse (0.2 and 0.5);
    \draw (2,-0.5) arc (-90:90:0.7 and 0.5);
    \draw (2,-1) ellipse (0.2 and 0.5);
    \draw (2,1.5)..++(0.01,0)..(4,0.5)..++(0.01,0);
    \draw (2,-1.5)..++(0.01,0)..(4,-0.5)..++(0.01,0);
    \draw (4,0) ellipse (0.2 and 0.5);
  \end{tikzpicture}
  \\
  \begin{tikzpicture}[scale=0.6, use Hobby shortcut, baseline=-20pt]
    \draw (0,1) ellipse (0.2 and 0.5);
    \draw (0,-1) ellipse (0.2 and 0.5);
    \draw (0,-3) ellipse (0.2 and 0.5);
    \draw (0,1.5)..++(-0.01,0)..(-2,0.5)..++(-0.01,0);
    \draw (0,-1.5)..++(-0.01,0)..(-2,-0.5)..++(-0.01,0);
    \draw (0,-2.5)..++(-0.01,0)..(-2,-1.5)..++(-0.01,0);
    \draw (0,-3.5)..++(-0.01,0)..(-2,-2.5)..++(-0.01,0);
    \draw (0,0.5) arc (90:270:0.7 and 0.5);
    \draw (-2,0) ellipse (0.2 and 0.5);
    \draw (-2,-2) ellipse (0.2 and 0.5);
    \draw (-2,-0.5) arc (90:270:0.7 and 0.5);
    \draw (-2,0.5)..++(-0.01,0)..(-4,-0.5)..++(-0.01,0);
    \draw (-2,-2.5)..++(-0.01,0)..(-4,-1.5)..++(-0.01,0);
    \draw (-4,-1) ellipse (0.2 and 0.5);
  \end{tikzpicture}
  \quad $\xlongequal{\textbf{diff.}}$ \quad
  \begin{tikzpicture}[scale=0.6, use Hobby shortcut, baseline=-20pt]
    \draw (0,1) ellipse (0.2 and 0.5);
    \draw (0,-1) ellipse (0.2 and 0.5);
    \draw (0,-3) ellipse (0.2 and 0.5);
    \draw (0,1.5)..++(-0.01,0)..(-2,0.5)..++(-0.01,0);
    \draw (0,0.5)..++(-0.01,0)..(-2,-0.5)..++(-0.01,0);
    \draw (0,-0.5)..++(-0.01,0)..(-2,-1.5)..++(-0.01,0);
    \draw (0,-3.5)..++(-0.01,0)..(-2,-2.5)..++(-0.01,0);
    \draw (0,-1.5) arc (90:270:0.7 and 0.5);
    \draw (-2,0) ellipse (0.2 and 0.5);
    \draw (-2,-2) ellipse (0.2 and 0.5);
    \draw (-2,-0.5) arc (90:270:0.7 and 0.5);
    \draw (-2,0.5)..++(-0.01,0)..(-4,-0.5)..++(-0.01,0);
    \draw (-2,-2.5)..++(-0.01,0)..(-4,-1.5)..++(-0.01,0);
    \draw (-4,-1) ellipse (0.2 and 0.5);
  \end{tikzpicture}
  \quad\quad 
  \begin{tikzpicture}[scale=0.6, use Hobby shortcut, baseline=-5pt]
    \draw (0,1) ellipse (0.2 and 0.5);
    \draw (0,-1) ellipse (0.2 and 0.5);
    \draw (0,1.5)..++(-0.01,0)..(-1,0.8)..(-2,-0.5)..++(-0.01,0);
    \draw (0,0.5)..++(-0.01,0)..(-1,-0.8)..(-2,-1.5)..++(-0.01,0);
    \draw (0,-1.5)..++(-0.01,0)..(-1,-0.8)..(-2,0.5)..++(-0.01,0);
    \draw (0,-0.5)..++(-0.01,0)..(-1,0.8)..(-2,1.5)..++(-0.01,0);
    \draw (-2,1) ellipse (0.2 and 0.5);
    \draw (-2,0.5) arc (90:270:0.7 and 0.5);
    \draw (-2,-1) ellipse (0.2 and 0.5);
    \draw (-2,1.5)..++(-0.01,0)..(-4,0.5)..++(-0.01,0);
    \draw (-2,-1.5)..++(-0.01,0)..(-4,-0.5)..++(-0.01,0);
    \draw (-4,0) ellipse (0.2 and 0.5);
  \end{tikzpicture}
  \quad $\xlongequal{\textbf{diff.}}$ \quad
  \begin{tikzpicture}[scale=0.6, use Hobby shortcut, baseline=-5pt]
    \draw (-2,1) ellipse (0.2 and 0.5);
    \draw (-2,0.5) arc (90:270:0.7 and 0.5);
    \draw (-2,-1) ellipse (0.2 and 0.5);
    \draw (-2,1.5)..++(-0.01,0)..(-4,0.5)..++(-0.01,0);
    \draw (-2,-1.5)..++(-0.01,0)..(-4,-0.5)..++(-0.01,0);
    \draw (-4,0) ellipse (0.2 and 0.5);
  \end{tikzpicture}
  \\
  \begin{tikzpicture}[scale=0.6, use Hobby shortcut, baseline=-20pt]
    \draw (0,1) ellipse (0.2 and 0.5);
    \draw (0,-1) ellipse (0.2 and 0.5);
    \draw (0,-3) ellipse (0.2 and 0.5);
    \draw (0,1.5)..++(-0.01,0)..(-2,0.5)..++(-0.01,0);
    \draw (0,0.5)..++(-0.01,0)..(-2,-0.5)..++(-0.01,0);
    \draw (0,-0.5)..++(-0.01,0)..(-2,-1.5)..++(-0.01,0);
    \draw (0,-3.5)..++(-0.01,0)..(-2,-2.5)..++(-0.01,0);
    \draw (0,-1.5) arc (90:270:0.7 and 0.5);
    \draw (-2,0) ellipse (0.2 and 0.5);
    \draw (-2,-2) ellipse (0.2 and 0.5);
    \draw (0,1.5)..++(0.01,0)..(2,0.5)..++(0.01,0);
    \draw (0,-1.5)..++(0.01,0)..(2,-0.5)..++(0.01,0);
    \draw (0,-2.5)..++(0.01,0)..(2,-1.5)..++(0.01,0);
    \draw (0,-3.5)..++(0.01,0)..(2,-2.5)..++(0.01,0);
    \draw (0,-0.5) arc (-90:90:0.7 and 0.5);
    \draw (2,0) ellipse (0.2 and 0.5);
    \draw (2,-2) ellipse (0.2 and 0.5);
  \end{tikzpicture}
  \quad $\xlongequal{\textbf{diff.}}$ \quad
  \begin{tikzpicture}[scale=0.6, use Hobby shortcut, baseline=-20pt]
    \draw (0,0) ellipse (0.2 and 0.5);
    \draw (0,-2) ellipse (0.2 and 0.5);
    \draw (2,-1) ellipse (0.2 and 0.5);
    \draw (4,0) ellipse (0.2 and 0.5);
    \draw (4,-2) ellipse (0.2 and 0.5);
    \draw (0,0.5)..++(0.01,0)..(2,-0.5)..++(0.01,0);
    \draw (0,-2.5)..++(0.01,0)..(2,-1.5)..++(0.01,0);
    \draw (2,-0.5)..++(0.01,0)..(4,0.5)..++(0.01,0);
    \draw (2,-1.5)..++(0.01,0)..(4,-2.5)..++(0.01,0);
    \draw (4,-0.5) arc (90:270:0.7 and 0.5);
    \draw (0,-1.5) arc (-90:90:0.7 and 0.5);
  \end{tikzpicture}
  \quad $\xlongequal{\textbf{diff.}}$ \quad
  \begin{tikzpicture}[scale=0.6, use Hobby shortcut, baseline=-20pt]
    \draw (0,1) ellipse (0.2 and 0.5);
    \draw (0,-1) ellipse (0.2 and 0.5);
    \draw (0,-3) ellipse (0.2 and 0.5);
    \draw (0,1.5)..++(-0.01,0)..(-2,0.5)..++(-0.01,0);
    \draw (0,-1.5)..++(-0.01,0)..(-2,-0.5)..++(-0.01,0);
    \draw (0,-2.5)..++(-0.01,0)..(-2,-1.5)..++(-0.01,0);
    \draw (0,-3.5)..++(-0.01,0)..(-2,-2.5)..++(-0.01,0);
    \draw (0,0.5) arc (90:270:0.7 and 0.5);
    \draw (-2,0) ellipse (0.2 and 0.5);
    \draw (-2,-2) ellipse (0.2 and 0.5);
    \draw (0,1.5)..++(0.01,0)..(2,0.5)..++(0.01,0);
    \draw (0,0.5)..++(0.01,0)..(2,-0.5)..++(0.01,0);
    \draw (0,-0.5)..++(0.01,0)..(2,-1.5)..++(0.01,0);
    \draw (0,-3.5)..++(0.01,0)..(2,-2.5)..++(0.01,0);
    \draw (0,-2.5) arc (-90:90:0.7 and 0.5);
    \draw (2,0) ellipse (0.2 and 0.5);
    \draw (2,-2) ellipse (0.2 and 0.5);
  \end{tikzpicture}\quad\quad\quad\quad\quad\quad\quad 
  \caption{Diffeomorphic cobordisms}
  \label{fig:Diffeomorphism cobordisms}
\end{figure}
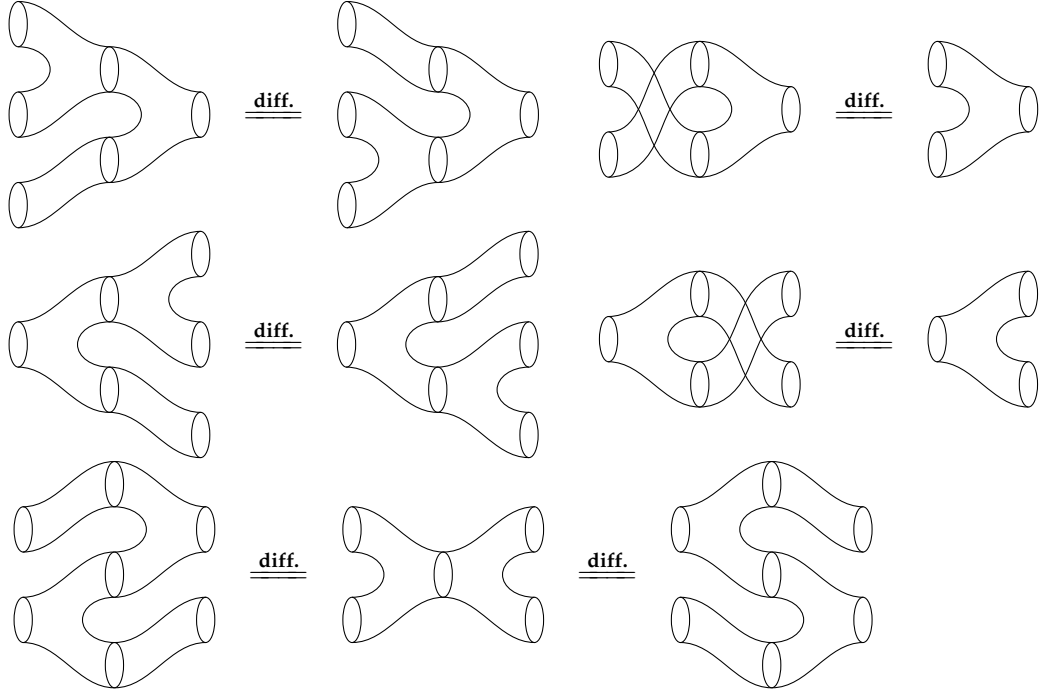

The first and second relations depicted in Figure~\ref{fig:Diffeomorphism cobordisms}, called \emph{associative relation} and \emph{commutative relation}, show that $(A, m)$ is a commutative and associative $R$-algebra\footnote{In this article, an algebra may not have an identity element. If it has an identity element, we say it is \emph{unital}.}. The third and fourth relations, named \emph{co-associative relation} and \emph{co-commutative relation}, illustrate that the comultiplication $\Delta$ is co-associative and co-communicative. And the last relation, usually called the \emph{Frobenius relation}, is equivalent to the statement that $\Delta$ is an algebraic homomorphism. As an algebraic structure induced by almost 2D-TQFT, it is natural to introduce the definition of nearly Frobenius algebra.

\begin{definition}[\cite{AGL2015, GLSU2019}]
A \emph{nearly Frobenius algebra} is a commutative, associative $R$-algebra $(A, m)$, which may not contain identity element, equipped with a co-associative and co-commutative homomorphism $\Delta\in\mathrm{Hom}(A, A\otimes A)$.
\end{definition}

\begin{remark}
As pointed out in \cite{AGL2015}, if $A$ is unital then the comultiplication is completely determined by its evaluation on $1\in A$, since $\Delta(x)=(x\otimes1)\Delta(1)$.
\end{remark}

Similar to the fact that there exists a one-to-one correspondence between $2$D-TQFTs and Frobenius algebras, we have the following result.
 
\begin{proposition}[\cite{GLSU2019}]
There exists a one-to-one correspondence between almost 2D-TQFTs and nearly Frobenius algebras.
\end{proposition}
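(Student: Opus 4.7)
The plan is to reduce this proposition to a presentation theorem for $\textbf{N2Cob}$ as a symmetric monoidal category by generators and relations, modeled on the classical proof for $\textbf{2Cob}$ in \cite{Kock2004}, and then to observe that such a presentation translates directly into the data of a nearly Frobenius algebra.

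First I would construct the two functors explicitly. Given an almost 2D-TQFT $F$, set $A=F(S^1)$, $m=F(\text{pants})$, $\Delta=F(\text{copants})$; applying $F$ to the five diffeomorphism identities of Figure~\ref{fig:Diffeomorphism cobordisms} yields commutativity, associativity, co-commutativity, co-associativity, and the Frobenius relation, so $(A,m,\Delta)$ is a nearly Frobenius algebra. Conversely, given $(A,m,\Delta)$, define $F(\sqcup_n S^1)=A^{\otimes n}$ on objects; on a morphism $\Sigma$, choose a generic self-indexing Morse function $f$ on $\Sigma$ with standard boundary behaviour and slice $\Sigma$ between its critical levels. By hypothesis $f$ has no critical points of index $0$ or $2$, so each slice is a disjoint union of pants, copants, cylinders, and permutations; set $F(\Sigma)$ equal to the corresponding composition of $m$, $\Delta$, $\mathrm{Id}$, $\mathrm{Perm}$, tensored appropriately with identities.

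The crux is well-definedness: any two generic Morse decompositions of the same $\Sigma$ must induce the same $R$-linear map. I would appeal to Cerf theory on the space of smooth functions $\Sigma\to\mathbb{R}$ with prescribed boundary data. A generic one-parameter family $f_t$ exhibits only codimension-one singularities, namely critical-value crossings and index-preserving handle-slides; the first are absorbed by $\mathrm{Perm}$ together with commutativity and co-commutativity of $m$ and $\Delta$, while the second yield precisely associativity, co-associativity, and the Frobenius relation. Birth--death transitions involving index $0$ or $2$ critical points can be excised from the family by a transversality perturbation, so no (co)unit axiom is ever required. Once well-definedness is established, functoriality, symmetric monoidality, and the fact that the two constructions are mutually inverse are routine to verify on generators.

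The main obstacle is making this Cerf-theoretic step rigorous inside the restricted category: one must argue that the subspace of Morse functions on $\Sigma$ having only index-$1$ interior critical points is still path-connected modulo the five relations of Figure~\ref{fig:Diffeomorphism cobordisms}, that is, that every generic homotopy between two such functions can be replaced by one whose singular events remain in the admissible stratum. This is the place where one must be certain that discarding birth/death cobordisms from the generating set does not accidentally disconnect the moduli of decompositions. Assuming this presentation theorem, the desired one-to-one correspondence between almost 2D-TQFTs and nearly Frobenius algebras follows formally.
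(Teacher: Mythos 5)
Your proposal is correct and follows essentially the same route as the paper: both directions are constructed on generators, and well-definedness is reduced to a presentation of $\textbf{N2Cob}$ by the generators $m,\Delta,\mathrm{Id},\mathrm{Perm}$ subject to the relations of Figure~\ref{fig:Diffeomorphism cobordisms}, exactly as the paper does by citing the argument of \cite[Section 1.4]{Kock2004} and \cite{GLSU2019}. The Cerf-theoretic step you flag as the main obstacle is precisely the point the paper also delegates to those references rather than proving from scratch, so your treatment is, if anything, more explicit about where the real work lies.
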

\begin{proof}
The proof is quite similar to the proof of the equivalence of categories between 2D-TQFTs and Frobenius algebras, see also \cite{GLSU2019}. We sketch it here.  According to the analysis above, now each almost 2D-TQFT induces a nearly Frobenius algebra. Conversely, each nearly Frobenius algebra provides a value for the image of each generator of $\mathrm{Mor}(\textbf{N2Cob})$ under the almost 2D-TQFT, and all the relations induced by the diffeomorphic cobordisms in Figure \ref{fig:Diffeomorphism cobordisms} are satisfied. All these relations, up to permutations, are sufficient by the similar reason as \cite[Section 1.4]{Kock2004}, which implies that each nearly Frobenius algebra induces an almost 2D-TQFT as well. It is easy to observe that these two maps are inverse of each other.
\end{proof}

Now the study of almost 2D-TQFTs can be translated into the language of nearly Frobenius algebras. We will focus on rank-$2$ algebras with principal integral domain coefficients in the next section.

\section{Rank-$2$ algebras}\label{Rank-$2$ algebras}
In this section, we classify all the rank-2 commutative algebras with field coefficients. In the end, we will give a proof of Theorem \ref{theorem1.2}.

\subsection{Field-coefficient case}
In this subsection, we first take a quick review of the classification theorem for commutative rank-$2$ algebras over an algebraically closed field of characteristic different from 2 \cite{EM2017}, then we provide a similar classification theorem with characteristic $2$. By investigating each isomorphic class of these algebras, we determine which of them are associative or unital.

\subsubsection{Characteristic other than $2$ case}
There are many works concerning the question of classifying rank-$2$ algebras, such as \cite{EM2017, HUI2020, JJJ2007}. Different authors describe their results by different representative algebras in each isomorphic class. Here we recall the result in \cite{EM2017} where the characteristic of coefficient ring does not equal to 2. When the field has characteristic rather than $2$, the main idea is to split the multiplication map into its symmetric part and skew-symmetric part. The skew-symmetric part, as being rank-$2$, is a Lie algebra. So the key point to classify the symmetric part, which can be done by counting the number of idempotent elements. Throughout this section we only consider the symmetric case.

\begin{theorem}\cite{EM2017}\label{Theorem in EM2017}
Any commutative rank-$2$ algebra over an algebraically closed field of characteristic different from $2$ is isomorphic to one of the following, here $e_1$ and $e_2$ denote the generators of the algebra.
  \begin{itemize}
    \setlength{\itemsep}{5pt}
    \setlength{\parsep}{5pt}
    \setlength{\parskip}{5pt}
    \item \(\left\{\begin{aligned}
      m^6(e_1,e_1)&=e_1, \\
      m^6(e_1,e_2)&=\alpha_2e_1+\beta_2e_2, \\
      m^6(e_2,e_2)&=e_2.
    \end{aligned}\right .\)
    \quad  
    \(\left\{\begin{aligned}
      m^7(e_1,e_1)&=e_1, \\
      m^7(e_1,e_2)&=e_1+\frac{1}{2}e_2, \\
      m^7(e_2,e_2)&=0. 
    \end{aligned}\right .\)
    \quad 
    \(\left\{\begin{aligned}
      m^8(e_1,e_1)&=e_1, \\
      m^8(e_1,e_2)&=\frac{1}{2}e_2, \\
      m^8(e_2,e_2)&=e_1. 
    \end{aligned}\right .\)
    \item \(\left\{\begin{aligned}
      m^9(e_1,e_1)&=e_1, \\
      m^9(e_1,e_2)&=\beta_2e_2, \\
      m^9(e_2,e_2)&=0.
    \end{aligned}\right .\)
     $\beta_2\neq\frac{1}{2}$, 
    \quad  
    \(\left\{\begin{aligned}
      m^{10}(e_1,e_1)&=e_1, \\
      m^{10}(e_1,e_2)&=e_1, \\
      m^{10}(e_2,e_2)&=\alpha_4e_1. 
    \end{aligned}\right .\)
    \quad 
    \(\left\{\begin{aligned}
      m^{11}(e_1,e_1)&=e_1, \\
      m^{11}(e_1,e_2)&=0, \\
      m^{11}(e_2,e_2)&=e_1. 
    \end{aligned}\right .\)
    \item \(\left\{\begin{aligned}
      m^{12}(e_1,e_1)&=e_1, \\
      m^{12}(e_1,e_2)&=0, \\
      m^{12}(e_2,e_2)&=0.
    \end{aligned}\right .\)
    \quad  
    \(\left\{\begin{aligned}
      m^{13}(e_1,e_1)&=e_2, \\
      m^{13}(e_1,e_2)&=e_2, \\
      m^{13}(e_2,e_2)&=0. 
    \end{aligned}\right .\)
    \quad 
    \(\left\{\begin{aligned}
      m^{14}(e_1,e_1)&=e_2, \\
      m^{14}(e_1,e_2)&=0, \\
      m^{14}(e_2,e_2)&=0. 
    \end{aligned}\right .\)
    \item \(\left\{\begin{aligned}
      m^{15}(e_1,e_1)&=e_2, \\
      m^{15}(e_1,e_2)&=-2e_1+3e_2, \\
      m^{15}(e_2,e_2)&=-8e_1+8e_2.
    \end{aligned}\right .\)
    \quad  
    \(\left\{\begin{aligned}
      m^{16}(e_1,e_1)&=0, \\
      m^{16}(e_1,e_2)&=e_1, \\
      m^{16}(e_2,e_2)&=0. 
    \end{aligned}\right .\)
    \quad 
    \(\left\{\begin{aligned}
      m^{17}(e_1,e_1)&=0, \\
      m^{17}(e_1,e_2)&=0, \\
      m^{17}(e_2,e_2)&=0. 
    \end{aligned}\right .\)
  \end{itemize}
  If the coefficient field $\mathbb{K}$ is not algebraically closed, we have also the following algebras where $\lambda_2\in\mathbb{K}\setminus(\mathbb{K}^*)^2$ and polynomial $P_A(y)$ equals to 
  \[-1+y(4\alpha_2+\beta_4)+y^2(2\alpha_4\beta_2-4\alpha_2^2-4\alpha_2\beta_4)+y^3(\alpha_4^2-4\alpha_2\alpha_4\beta_2+4\alpha_2^2\beta_4). \]
  \begin{itemize}
    \setlength{\itemsep}{5pt}
    \setlength{\parsep}{5pt}
    \setlength{\parskip}{5pt}
    \item \(\left\{\begin{aligned}
      m^{8,1}_R(e_1,e_1)&=e_1, \\
      m^{8,1}_R(e_1,e_2)&=\frac{1}{2}e_2, \\
      m^{8,1}_R(e_2,e_2)&=\lambda_2e_1.
    \end{aligned}\right .\)
    \,   
    \(\left\{\begin{aligned}
      m^{8,2}_R(e_1,e_1)&=e_1, \\
      m^{8,2}_R(e_1,e_2)&=\beta_2e_2, \\
      m^{8,2}_R(e_2,e_2)&=\lambda_2e_1. 
    \end{aligned}\right .\)
     $1-2\beta_2\notin(\mathbb{K}^*)^2$, 
    \(\left\{\begin{aligned}
      m^{11}_R(e_1,e_1)&=e_1, \\
      m^{11}_R(e_1,e_2)&=0, \\
      m^{11}_R(e_2,e_2)&=\lambda_2e_1. 
    \end{aligned}\right .\)
    \item \(\left\{\begin{aligned}
      m^{14,1}_R(e_1,e_1)&=e_1, \\
      m^{14,1}_R(e_1,e_2)&=\alpha_2e_1+e_2, \\
      m^{14,1}_R(e_2,e_2)&=0.
    \end{aligned}\right .\)
     $2\alpha_2+1\notin\mathbb{K}^2$, 
    \(\left\{\begin{aligned}
      m^{14,2}_R(e_1,e_1)&=e_1, \\
      m^{14,2}_R(e_1,e_2)&=\alpha_2e_1, \\
      m^{14,2}_R(e_2,e_2)&=0.
    \end{aligned}\right .\)
     $2\alpha_2+1\notin\mathbb{K}^2$, 
    \item \(\left\{\begin{aligned}
      m^{15,1}_R(e_1,e_1)&=e_2, \\
      m^{15,1}_R(e_1,e_2)&=\alpha_2e_1+\beta_2e_2, \\
      m^{15,1}_R(e_2,e_2)&=\alpha_4e_1+\beta_4e_2.
    \end{aligned}\right .\)
     $P_A(y)$ without roots. 
  \end{itemize}
\end{theorem}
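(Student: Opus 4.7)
The plan is to analyse the $GL_2(\mathbb{K})$-orbits on the space of symmetric products $m\colon A\otimes A\to A$ on $A=\mathbb{K}^2$. Fixing a basis $\{e_1,e_2\}$, a commutative product is encoded by the three vectors $m(e_1,e_1),\,m(e_1,e_2),\,m(e_2,e_2)$, i.e.\ by six scalars, and the change of basis group acts by linear conjugation. Since $\operatorname{char}\mathbb{K}\neq 2$, polarisation recovers $m$ from the squaring map $s\colon A\to A$, $s(x)=x^2$, so it is equivalent to classify quadratic maps $s$ up to conjugation. The first step is to stratify by the $GL_2(\mathbb{K})$-invariant $d:=\dim\operatorname{span}\operatorname{im}(s)\in\{0,1,2\}$.

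When $d=2$ and $\mathbb{K}$ is algebraically closed, $A$ admits an idempotent: the projective self-map $[x]\mapsto[s(x)]$ on $\mathbb{P}(A)\cong\mathbb{P}^1$ has a fixed point, yielding $x\neq 0$ with $x^2=\lambda x$, and rescaling produces $e$ with $e^2=e$. Setting $e_1=e$, the residual symmetry $e_2\mapsto\mu e_1+\nu e_2$ (with $\nu\neq 0$) can then be used to normalise the coefficients $\alpha_2,\beta_2$ in $m(e_1,e_2)=\alpha_2 e_1+\beta_2 e_2$ and similarly for $m(e_2,e_2)$. The scalar $\beta_2$ is a stabiliser invariant, and its special values $0,\tfrac12,1$ split the analysis into the families $m^6,\ldots,m^{12}$, with further subdivision governed by whether an auxiliary quadratic form has nontrivial kernel. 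When $d\leq 1$, the image of $s$ lies on a line, so $A$ contains nonzero nilpotent directions, and a basis aligned with such a direction produces $m^{13},\ldots,m^{17}$ by elementary linear algebra.

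For non-algebraically-closed $\mathbb{K}$, the same strategy applies, but several normalisation steps require extracting square roots that may not exist in $\mathbb{K}$. The obstruction is a class in $\mathbb{K}^*/(\mathbb{K}^*)^2$, and this produces the twisted forms $m^{8,i}_R,\,m^{11}_R,\,m^{14,i}_R$ indexed by $\lambda_2\in\mathbb{K}\setminus(\mathbb{K}^*)^2$ together with the parity conditions on $1-2\beta_2$ and $2\alpha_2+1$. The most delicate family $m^{15,1}_R$ corresponds to $A$ having no idempotent at all, which is precisely the condition that the polynomial $P_A(y)$, encoding when a generic line in $A$ is preserved up to scale by $s$, has no root in $\mathbb{K}$.

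The principal obstacle will be the \emph{distinctness} step: one must verify that the listed representatives lie in genuinely different $GL_2(\mathbb{K})$-orbits. This requires exhibiting a complete set of invariants ($d$, the number of idempotents, nilpotent rank, square-class data and the root structure of $P_A$) and performing a stabiliser-dimension count for the continuous families $m^6,m^9,m^{10},m^{15,1}_R$ to show that distinct parameter values cannot be absorbed by any remaining change of basis. These computations are elementary but tedious, and they form the bulk of the work.
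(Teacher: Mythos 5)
The paper itself offers no proof of this statement: it is quoted verbatim from Remm--Goze \cite{EM2017} (see the remark immediately following it), and the only methodological indication given is that, after splitting $m$ into symmetric and skew-symmetric parts, the symmetric part is classified \emph{by counting idempotents} (cf.\ Remark~\ref{Idempotent remark}). Your reconstruction instead stratifies by $d=\dim\operatorname{span}\operatorname{im}(s)$ and hinges on the claim that for $d=2$ over an algebraically closed field the projective self-map $[x]\mapsto[s(x)]$ has a fixed point and hence $A$ has an idempotent. That step fails: the map has base points wherever $s(x)=0$, and a solution of $s(x)=\lambda x$ may have $\lambda=0$. The algebra $m^{15}$ in the very list you are proving is a counterexample. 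There $s(xe_1+ye_2)=(-4xy-8y^2)e_1+(x^2+6xy+8y^2)e_2$, so $\det(s(v),v)=-(x+2y)^3$; the unique eigendirection is $v=-2e_1+e_2$, which satisfies $v^2=0$, and the algebra has surjective multiplication (so $d=2$) but no idempotent, exactly as recorded in Remark~\ref{Idempotent remark}. What algebraic closure actually yields is a root of the cubic form $\det(s(x),x)$, i.e.\ some $x\neq0$ with $x^2\in\mathbb{K}x$; the dichotomy between the scalar being nonzero (idempotent) and zero (square-zero element) is precisely where the families of the theorem split, and your argument erases the second branch.

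A secondary but related problem is that the invariant $d$ does not separate the families the way you assert: $m^{12}$ has $d=1$ yet belongs with the idempotent-bearing algebras, while $m^{15}$ has $d=2$ yet has none, so ``$d\le1$ produces $m^{13},\dots,m^{17}$'' is false in both directions. The primary invariant must be the number of idempotents ($\ge2$, exactly $1$, or $0$), refined over non-closed fields by the square-class data and the root behaviour of $P_A$ that you correctly identify for the twisted forms. Your closing discussion of distinctness of orbits is sensible, but it cannot compensate for the existence half of the classification, which as written would omit $m^{15}$ (and misfile $m^{12}$).
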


\begin{remark}
Here we just repeat the statement of \cite[Theorem 6]{EM2017}. Actually, the case $m^{8, 1}_R$ can be recovered from $m^{8,2}_R$ by setting $\beta_2=\frac{1}{2}$.
\end{remark}

\begin{remark}\label{Idempotent remark}
According to the process of classification in \cite{EM2017}, the algebra(s) 
  \begin{itemize}
    \item $m^6$ has at least two idempotent elements $e_1$ and $e_2$;
    \item $m^7,\,m^8,\,m^{8,1}_R,\,m^{8,2}_R,\,m^9,\,m^{10},\,m^{11},\,m^{11}_R,\,m^{12}$ have only one idempotent $e_1$;
    \item $m^{13},\,m^{14},\,m^{14,1}_R,\,m^{14,2}_R,\,m^{15},\,m^{15,1}_R,\,m^{16},\,m^{17}$ have no idempotent. 
  \end{itemize} 
\end{remark}

Then we can check whether these algebras are associative or unital one by one. The result is listed in Proposition \ref{The further proposition with characteristic other than 2} below.
\begin{proposition}\label{The further proposition with characteristic other than 2}
For the algebras mentioned in Theorem \ref{Theorem in EM2017}, we have
  \begin{enumerate}
    \item $m^6$ is associative if and only if $(\alpha_2,\beta_2)\in\{(0, 0), (0, 1), (1, 0)\}$. These three isomorphic algebras are all unital;
    \item $m^7,m^8,m^{11},m^{15},m^{16},m^{8,1}_R,m^{14,1}_R,m^{14,2}_R$ are not associative nor unital;
    \item $m^9$ is associative if and only if $\beta_2=0$ or 1 and unital if and only if $\beta_2=1$;
    \item $m^{10}$ is associative if and only if $\alpha_4=1$, but it is not unital;
    \item $m^{12}, m^{13}, m^{14}, m^{17}$ are associative but not unital;
    \item $m^{8,2}_R$ is associative if and only if $\beta_2\in\{1, -1\}\nsubseteq(\mathbb{K}^*)^2$, at this point it is also unital;
    \item $m^{11}_R$ is associative if and only if $\lambda_2=0$, however it is not unital;
    \item $m^{15,1}_R$ is associative if and only if $\alpha_4=\alpha_2\beta_2,\,\beta_4=\alpha_2+\beta_2^2$, but it is not unital. 
  \end{enumerate}
\end{proposition}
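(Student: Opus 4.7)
The plan is a routine case-by-case verification through the families of Theorem \ref{Theorem in EM2017}. There is no conceptual shortcut here: associativity and unitality are local conditions on the multiplication table, and the content is bookkeeping, with a few parametric cases requiring extra care around the non-square hypotheses.

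The first step is to reduce associativity on the basis $\{e_1,e_2\}$ to a minimal set of identities. Since each candidate algebra is already commutative, the only nontrivial associator checks on basis triples are
\begin{equation*}
(e_1e_1)e_2=e_1(e_1e_2),\qquad (e_1e_2)e_2=e_1(e_2e_2),
\end{equation*}
the triples $(e_1,e_1,e_1)$ and $(e_2,e_2,e_2)$ being trivial and the remaining ones reduced to these by commutativity. Expanding each side in terms of the structure constants $\alpha_i,\beta_j$ and comparing coefficients of $e_1$ and $e_2$ produces two pairs of polynomial equations whose common solution set determines when the algebra is associative. For unitality, I posit $u=ae_1+be_2$ and impose $ue_i=e_i$ for $i=1,2$; this yields a $4\times 2$ linear system in $(a,b)$ over $\mathbb{K}$ whose consistency is immediate to read off.

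The remainder is mechanical: apply this template to each algebra in Theorem \ref{Theorem in EM2017}. As a representative calculation, for $m^6$ the two associativity identities force $\alpha_2^2=\alpha_2$, $\beta_2^2=\beta_2$, and $\alpha_2\beta_2=0$, leaving $(\alpha_2,\beta_2)\in\{(0,0),(0,1),(1,0)\}$; inspection then produces $e_1+e_2$, $e_1$, or $e_2$ respectively as the unit. The rigid items $m^7,m^8,m^{11},m^{15},m^{16},m^{8,1}_R,m^{14,1}_R,m^{14,2}_R$ each fail one associator equation outright; the associative items $m^{12},m^{13},m^{14},m^{17}$ are handled by showing the corresponding unit system is inconsistent; and $m^9,m^{10},m^{8,2}_R,m^{11}_R,m^{15,1}_R$ are parametric families whose associativity conditions cut out finite or codimension-one subvarieties of parameter space, on which unitality is then checked.

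The main obstacle lies in the non-algebraically-closed families, where the non-square hypotheses on $\lambda_2$ or on polynomial expressions in the parameters have to be invoked to exclude apparent units. For $m^{15,1}_R$, for instance, associativity forces $\alpha_4=\alpha_2\beta_2$ and $\beta_4=\alpha_2+\beta_2^2$; the unit equations then formally produce the candidate $u=-\beta_2\alpha_2^{-1}e_1+\alpha_2^{-1}e_2$ when $\alpha_2\neq 0$, but direct substitution shows that $y=\alpha_2^{-1}$ is a root of $P_A(y)$ under those relations, contradicting the standing hypothesis that $P_A$ is root-free; the degenerate case $\alpha_2=0$ is dispatched by inspection. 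Analogous square-hypothesis arguments handle $m^{8,2}_R$ and $m^{11}_R$, and careful tracking of the characteristic assumption $\operatorname{char}\mathbb{K}\neq 2$ ensures that the divisions by $2$ implicit in several of the defining tables are legitimate.
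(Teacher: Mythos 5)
Your proposal is correct and follows essentially the same route as the paper: the reduction of associativity to the two basis identities is exactly the paper's Lemma~\ref{Associative and unital lemma}, and the rest is the same case-by-case computation. The only cosmetic difference is that you certify non-unitality by solving the linear system $ue_i=e_i$ directly (invoking the root-free hypothesis on $P_A$ for $m^{15,1}_R$), whereas the paper shortcuts this via the idempotent counts of Remark~\ref{Idempotent remark}; both are valid.
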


Before we start our proof of Proposition~\ref{The further proposition with characteristic other than 2}, we need the following lemme. The first statement of it can be used to detect whether the algebra is associative and the second one shows that identity elements only exist in associative cases.

\begin{lemma}\label{Associative and unital lemma}
Suppose $(A, m)$ is a 2-dimensional commutative $R$-algebra generated by $e_1$ and $e_2$, which probably contains no identity element. Then 
\begin{enumerate}
\item it is associative if and only if $m(m(e_1,e_1),e_2)=m(e_1,m(e_1,e_2))$ and $m(m(e_2,e_2),e_1)=m(e_2,m(e_2,e_1))$;
\item it is associative if it contains an identity element. 
\end{enumerate}
\end{lemma}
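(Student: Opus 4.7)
The plan for part (1) is to use commutativity to collapse the full associativity identity on basis triples down to the two stated hypotheses. The forward direction is immediate from the definition. For the converse, $R$-bilinearity of $m$ reduces associativity to verifying $m(m(e_i,e_j),e_k)=m(e_i,m(e_j,e_k))$ on each of the eight triples $(i,j,k)\in\{1,2\}^3$. The diagonal cases $(1,1,1)$ and $(2,2,2)$ are tautologies; the mixed cases $(1,2,1)$ and $(2,1,2)$ follow from commutativity alone applied at the inner or outer slot; and $(1,1,2)$, $(2,2,1)$ are exactly the two hypotheses. The remaining case $(1,2,2)$ is handled by the chain
\[
m(m(e_1,e_2),e_2)\xlongequal{\mathrm{comm.}}m(e_2,m(e_2,e_1))\xlongequal{\mathrm{hyp.}}m(m(e_2,e_2),e_1)\xlongequal{\mathrm{comm.}}m(e_1,m(e_2,e_2)),
\]
and case $(2,1,1)$ is entirely symmetric.

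For part (2), the plan is to show that any identity element $1\in A$ can be completed to an $R$-basis $\{1,e\}$, after which associativity is a one-line check. Writing $1=ae_1+be_2$, the first step is to show that $1$ is primitive, i.e.\ that any common divisor $d\in R$ of $a$ and $b$ must be a unit. Setting $a=da'$, $b=db'$, and $v=a'e_1+b'e_2$ gives $1=dv$ in $A$, whence
\[
v=1\cdot v=(dv)\cdot v=d\,v^2\quad\Longrightarrow\quad 1=dv=d^2v^2.
\]
Expanding $v^2$ in the basis $\{e_1,e_2\}$ and comparing coefficients forces $d^2\mid a$ and $d^2\mid b$; iterating this shows $a,b\in\bigcap_{n\geq1}d^nR$, which over a PID (the only setting needed for Theorem~\ref{theorem1.2}) is zero unless $d$ is a unit. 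Hence $\gcd(a,b)=1$, and Bezout produces $s,t\in R$ with $sa+tb=1$, so that $\{1,\,-te_1+se_2\}$ has change-of-basis determinant $as+bt=1$ and is therefore an $R$-basis of $A$. In any such basis $\{1,e\}$, the multiplication is fully determined by $e^2=\alpha+\beta e$ for some $\alpha,\beta\in R$, identifying $A$ with $R[x]/(x^2-\beta x-\alpha)$, which is manifestly associative.

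The main obstacle is the primitivity step in part (2): the cancellation yielding $d^2\mid a,b$ requires $R$ to be a domain, and the conclusion $\bigcap_n d^n R=0$ for non-unit $d$ uses the PID hypothesis. Once $1$ has been promoted to a basis vector, the remaining verification is trivial. Part (1), by contrast, is pure commutativity bookkeeping and needs no ring-theoretic input.
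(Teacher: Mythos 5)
Your part (1) is correct and is essentially the paper's own argument: reduce to basis triples by bilinearity, dispose of the diagonal and $i=k\neq j$ cases by commutativity, and observe that the two remaining pairs of triples are interchanged by commutativity. For part (2) you take a genuinely different, and in one respect more careful, route. The paper simply asserts that an identity element $1$ can be completed to a generating pair $\{1,e\}$ and then applies part (1) with $e_2$ replaced by $1$; you actually prove the completion, via a primitivity argument ($d\mid a,b$ forces $a,b\in\bigcap_n d^nR$, hence $d$ is a unit) followed by B\'ezout. This fills a gap the paper leaves open, but at the cost of invoking the PID hypothesis (and a Krull-intersection-type fact), whereas the lemma is stated for an arbitrary commutative $R$. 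There is a one-line argument that works over any base ring and would let you keep the lemma's full generality: writing $1=ae_1+be_2$ and $m(e_i,e_j)=\sum_k c_{ij}^k e_k$, the relation $1\cdot e_1=e_1$ gives, on comparing $e_1$-coefficients, $ac_{11}^1+bc_{21}^1=1$, so $(a,b)$ is already a unimodular row and the B\'ezout coefficients are handed to you for free; the $2\times 2$ completion to an invertible matrix then proceeds exactly as in your last step. Your concluding identification of $A$ with $R[x]/(x^2-\beta x-\alpha)$ is a fine alternative to the paper's ``substitute $1$ for $e_2$ in part (1)''; both are immediate once $\{1,e\}$ is known to be a basis.
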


\begin{proof}
  As the multiplication is linear, this algebra is associative if and only if $m(m(e_i, e_j), e_k)=m(e_i, m(e_j, e_k))$ for any $i, j, k\in\{1, 2\}$. When $i=j=k$, this equation follows from the commutativity of the multiplication. It suffices to consider the case that at least two of $\{i, j, k\}$ are distinct. When $i=k\neq j$, it holds by the commutativity as well. Then there are only four cases need to check, say
  \[m(m(e_1, e_1), e_2)=m(e_1, m(e_1, e_2)),\,m(m(e_2, e_2), e_1)=m(e_2, m(e_2, e_1)),\]
  \[m(m(e_2, e_1), e_1)=m(e_2, m(e_1, e_1)),\,m(m(e_1, e_2), e_2)=m(e_1, m(e_2, e_2)).\]
  Still, because $m$ is commutative, we know that the third equality is equivalent to the first one, and the fourth equality is equivalent to the second one. Then the equalities in the first statement is sufficient to determine whether corresponding algebra is associative or not.

If this algebra contains an identity element which is denoted by $1$, then $A$ can be generated by $1$ and another element, say $e_1$. Replacing $e_2$ with $1$ in the equalities in the first statement, we find that both of them hold. The second statement follows.
\end{proof}

Now we turn to the proof of Proposition \ref{The further proposition with characteristic other than 2}.

\begin{proof}
Let us check each case one by one.
\begin{enumerate}
  \item If $A=(\langle e_1,e_2\rangle,m^6)$: It can be calculated that 
  \begin{align*}
    m^6(m^6(e_1,e_1),e_2)&=\alpha_2e_1+\beta_2e_2,\\
    m^6(e_1,m^6(e_1,e_2))&=(\alpha_2+\alpha_2\beta_2)e_1+\beta_2^2e_2,\\
    m^6(m^6(e_2,e_2),e_1)&=\alpha_2e_1+\beta_2e_2,\\
    m^6(e_2,m^6(e_2,e_1))&=\alpha_2^2e_1+(\beta_2+\alpha_2\beta_2)e_2. 
  \end{align*}
Lemma~\ref{Associative and unital lemma} tells us that $A$ is associative if and only if $\alpha_2^2=\alpha_2$, $\beta_2^2=\beta_2$, $\alpha_2\beta_2=0$, which is equivalent to say, $(\alpha_2,\beta_2)\in\{(0, 0), (0, 1), (1, 0)\}$. For these cases, the algebra $A$ has identity element $e_1+e_2, e_1$ and $e_2$ respectively. And each of these three algebras is generated by an identity element and a nondegenerate idempotent, hence they are isomorphic to each other.
  \item If $A=(\langle e_1,e_2\rangle,m^7)$: It can be calculated that 
  \begin{align*}
    m^7(m^7(e_1,e_1),e_2)&=e_1+\frac{1}{2}e_2,\\
    m^7(e_1,m^7(e_1,e_2))&=\frac{3}{2}e_1+\frac{1}{4}e_2. 
  \end{align*}
  As $m^7(m^7(e_1,e_1),e_2)\neq m^7(e_1,m^7(e_1,e_2))$, by Lemma~\ref{Associative and unital lemma}, this algebra is not associative nor unital. 
  \item If $A=(\langle e_1,e_2\rangle,m^8)$: By the same reason of $m^7$, it is not associative nor unital either. 
  \item If $A=(\langle e_1,e_2\rangle,m^9)$: We have 
  \begin{align*}
    m^9(m^9(e_1,e_1),e_2)&=\beta_2e_2,\\
    m^9(e_1,m^9(e_1,e_2))&=\beta_2^2e_2,\\
    m^9(m^9(e_2,e_2),e_1)&=0,\\
    m^9(e_2,m^9(e_2,e_1))&=0. 
  \end{align*}
  So that the algebra is associative if and only if $\beta_2=0$ or1. And it can be easily verified that it is unital only if $\beta_2=1$, in which the identity element is $e_1$. 
  \item If $A=(\langle e_1,e_2\rangle,m^{10})$: Note that 
  \begin{align*}
    m^{10}(m^{10}(e_1,e_1),e_2)&=e_1,\\
    m^{10}(e_1,m^{10}(e_1,e_2))&=e_1,\\
    m^{10}(m^{10}(e_2,e_2),e_1)&=\alpha_4e_1,\\
    m^{10}(e_2,m^{10}(e_2,e_1))&=e_1,  
  \end{align*}
  It can be deduced from Lemma~\ref{Associative and unital lemma} that this algebra is associative if and only if $\alpha_4=1$. On the other hand, Remark \ref{Idempotent remark} suggests that $e_1$, as the only idempotent, is the only candidate of identity element as well. However, it is not an identity element because $m^{10}(e_1,e_2)=e_1\neq e_2$, hence this algebra is not unital.
  \item If $A=(\langle e_1,e_2\rangle,m^{11})$: We can also calculate that 
  \begin{align*}
    m^{11}(m^{11}(e_2,e_2),e_1)&=e_1,\\
    m^{11}(e_2,m^{11}(e_2,e_1))&=0\neq e_1.   
  \end{align*}
  So it is not associative, neither unital. 
  \item If $A=(\langle e_1,e_2\rangle,m^{12})$: because 
  \begin{align*}
    m^{12}(m^{12}(e_1,e_1),e_2)&=0,\\
    m^{12}(e_1,m^{12}(e_1,e_2))&=0,\\
    m^{12}(m^{12}(e_2,e_2),e_1)&=0,\\
    m^{12}(e_2,m^{12}(e_2,e_1))&=0,  
  \end{align*}
  we obtain that this algebra is associative. As it contains only one idempotent, $e_1$, which is not an identity element as $m^{12}(e_1,e_2)=0\neq e_2$, it follows that $A$ is not unital.
  \item If $A=(\langle e_1,e_2\rangle,m^{13})$: because 
  \begin{align*}
    m^{13}(m^{13}(e_1,e_1),e_2)&=e_2,\\
    m^{13}(e_1,m^{13}(e_1,e_2))&=e_2,\\
    m^{13}(m^{13}(e_2,e_2),e_1)&=0,\\
    m^{13}(e_2,m^{13}(e_2,e_1))&=0,  
  \end{align*}
  it is associative. While it is not unital as there is no idempotent. 
  \item If $A=(\langle e_1,e_2\rangle,m^{14})$: It can be calculated that 
  \begin{align*}
    m^{14}(m^{14}(e_1,e_1),e_2)&=0,\\
    m^{14}(e_1,m^{14}(e_1,e_2))&=0,\\
    m^{14}(m^{14}(e_2,e_2),e_1)&=0,\\
    m^{14}(e_2,m^{14}(e_2,e_1))&=0.  
  \end{align*}
  For the same reason above, this algebra is associative but not unital. 
  \item If $A=(\langle e_1,e_2\rangle,m^{15})$: Notice that 
  \begin{align*}
    m^{15}(m^{15}(e_1,e_1),e_2)&=-8e_1+8e_2,\\
    m^{15}(e_1,m^{15}(e_1,e_2))&=-6e_1+7e_2\neq -8e_1+8e_2, 
  \end{align*}
  hence this algebra is not associative, neither unital. 
  \item If $A=(\langle e_1,e_2\rangle,m^{16})$: It can be calculated that 
  \begin{align*}
    m^{16}(m^{16}(e_2,e_2),e_1)&=0,\\
    m^{16}(e_2,m^{16}(e_2,e_1))&=e_1\neq 0,   
  \end{align*}
  so that $A$ is not associative, neither unital. 
  \item If $A=(\langle e_1,e_2\rangle,m^{17})$: It is associative but not unital as the degeneration of multiplication.
  \item If $A=(\langle e_1,e_2\rangle,m^{8,1}_R)$: It can be calculated that 
  \begin{align*}
    m^{8,1}_R(m^{8,1}_R(e_1,e_1),e_2)&=\frac{1}{2}e_2,\\
    m^{8,1}_R(e_1,m^{8,1}_R(e_1,e_2))&=\frac{1}{4}e_2\neq  \frac{1}{2}e_2,
  \end{align*}
  so that this algebra is not associative, neither unital. 
  \item If $A=(\langle e_1,e_2\rangle,m^{8,2}_R)$: We have that 
  \begin{align*}
    m^{8,2}_R(m^{8,2}_R(e_1,e_1),e_2)&=\beta_2e_2,\\
    m^{8,2}_R(e_1,m^{8,2}_R(e_1,e_2))&=\beta_2^2e_2. 
  \end{align*}
  This shows that $A$ is associative only if $\beta_2=0$ or 1. If $\beta_2=0$, then $1-2\beta_2\in(\mathbb{K^*})^2$, which leads to a contradiction. If $\beta_2=1$, then $e_1$ is the identity element and hence this algebra is associative. And it should be noticed that $-1\notin (\mathbb{K^*})^2$ in this case. 
  \item If $A=(\langle e_1,e_2\rangle,m^{11}_R)$: It can be calculated that 
  \begin{align*}
    m^{11}_R(m^{11}_R(e_1,e_1),e_2)&=0,\\
    m^{11}_R(e_1,m^{11}_R(e_1,e_2))&=0,\\
    m^{11}_R(m^{11}_R(e_2,e_2),e_1)&=\lambda_2e_1,\\
    m^{11}_R(e_2,m^{11}_R(e_2,e_1))&=0.  
  \end{align*}
  This is to say, the algebra $A$ is associative if and only if $\lambda_2=0$. Same with the case of $m^{12}$, it is not unital as $e_1$, as the unique idempotent, is not an identity element. 
  \item If $A=(\langle e_1,e_2\rangle,m^{14,1}_R)$: We can calculate that 
  \begin{align*}
    m^{14,1}_R(m^{14,1}_R(e_1,e_1),e_2)&=0,\\
    m^{14,1}_R(e_1,m^{14,1}_R(e_1,e_2))&=\alpha_2e_1+(1+\alpha_2)e_2,\\
    m^{14,1}_R(m^{14,1}_R(e_2,e_2),e_1)&=0,\\
    m^{14,1}_R(e_2,m^{14,1}_R(e_2,e_1))&=\alpha_2^2e_1+\alpha_2e_2.  
  \end{align*}
  According to Lemma~\ref{Associative and unital lemma}, this algebra can not be associative, neither unital. 
  \item If $A=(\langle e_1,e_2\rangle,m^{14,2}_R)$: We have 
  \begin{align*}
    m^{14,2}_R(m^{14,2}_R(e_1,e_1),e_2)&=0,\\
    m^{14,2}_R(e_1,m^{14,2}_R(e_1,e_2))&=\alpha_2e_2,\\
    m^{14,2}_R(m^{14,2}_R(e_2,e_2),e_1)&=0,\\
    m^{14,2}_R(e_2,m^{14,2}_R(e_2,e_1))&=\alpha_2^2e_1.  
  \end{align*}
  So this algebra is associative if and only if $\alpha_2=0$. While this is conflict to $2\alpha_2+1\notin \mathbb{K}^2$, illustrating that $A$ is neither associative nor unital. 
  \item If $A=(\langle e_1,e_2\rangle,m^{15,1}_R)$: It is much more complicated that 
  \begin{align*}
    m^{15,1}_R(m^{15,1}_R(e_1,e_1),e_2)
    &=\alpha_4e_1+\beta_4e_2,\\
    m^{15,1}_R(e_1,m^{15,1}_R(e_1,e_2))
    &=\alpha_2\beta_2e_1+(\alpha_2+\beta_2^2)e_2,\\
    m^{15,1}_R(m^{15,1}_R(e_2,e_2),e_1)
    &=\alpha_2\beta_4e_1+(\alpha_4+\beta_2\beta_4)e_2,\\
    m^{15,1}_R(e_2,m^{15,1}_R(e_2,e_1))
    &=(\alpha_2^2+\alpha_4\beta_2)e_1+(\alpha_2+\beta_4)\beta_2e_2.
  \end{align*}
  And the algebra is associative if and only if 
  $\begin{cases}
    \alpha_4=\alpha_2\beta_2,\\
    \beta_4=\alpha_2+\beta_2^2,\\
    \alpha_2\beta_4=\alpha_2^2+\alpha_4\beta_2,\\
    \alpha_4+\beta_2\beta_4=(\alpha_2+\beta_4)\beta_2. \\
  \end{cases}$ 
  As the first and second equalities deduce the third and the fourth ones, the condition is equivalent to $\begin{cases}
    \alpha_4=\alpha_2\beta_2,\\
    \beta_4=\alpha_2+\beta_2^2.\\
  \end{cases}$ And in this case, the polynomial $P_A(y)$ can be written as $-1+y(5\alpha_2+\beta_2^2)+y^2(-8\alpha_2^2-2\alpha_2\beta_2^2)+y^3(4\alpha_2^3+\alpha_2^2\beta_2^2)$. Additionally, as this algebra has no idempotent by Remark~\ref{Idempotent remark}, it is not unital.
\end{enumerate}
\end{proof}

\subsubsection{Characteristic $2$ case}
In \cite{HUI2020}, the case of characteristic being $2$ without associative condition is well studied. The case of characteristic being $2$ with associative condition and coefficient field being algebraically closed is explored in \cite{JJJ2007}. However, what we need is a conclusion with general coefficient field. So we provide a classification here based on the method of Remm and Goze used in \cite{EM2017}.
\begin{theorem}\label{The classification with characteristic being 2}
  Let $\mathbb{K}$ be a field with characteristic $2$, $A$ be a commutative rank-$2$ associative $\mathbb{K}$-algebra. Then $A$ is isomorphic to one of follows, where $e_1$ and $e_2$ are the generators of the algebra. 
  \begin{itemize}
    \setlength{\itemsep}{5pt}
    \setlength{\parsep}{5pt}
    \setlength{\parskip}{5pt}
    \item \(\begin{cases}
      m_2^1(e_1,e_1)=e_1,\\
      m_2^1(e_1,e_2)=e_2,\\
      m_2^1(e_2,e_2)=e_2.\\
    \end{cases}
    \,
    \begin{cases}
      m_2^{2}(e_1,e_1)=e_1,\\
      m_2^{2}(e_1,e_2)=0,\\
      m_2^{2}(e_2,e_2)=0.\\
    \end{cases}
    \, 
    \begin{cases}
      m_2^{3}(e_1,e_1)=e_1,\\
      m_2^{3}(e_1,e_2)=0,\\
      m_2^{3}(e_2,e_2)=e_2.\\
    \end{cases}
    \, 
    \begin{cases}
      m_2^{4}(e_1,e_1)=e_1,\\
      m_2^{4}(e_1,e_2)=e_2,\\
      m_2^{4}(e_2,e_2)=\alpha_4e_1.\\
    \end{cases}\)
    \item \(\begin{cases}
      m_2^{5}(e_1,e_1)=e_1,\\
      m_2^{5}(e_1,e_2)=e_2,\\
      m_2^{5}(e_2,e_2)=\alpha_4e_1+e_2.\\
    \end{cases}
    \,\beta_4\neq 0,\,x^2+x+\alpha_4\text{ has no roots in }\mathbb{K}\setminus\{0,1\}.\)
    \item \(\begin{cases}
      m_2^6(e_1,e_1)=e_2,\\
      m_2^6(e_1,e_2)=0,\\
      m_2^6(e_2,e_2)=0.\\
    \end{cases}
    \quad 
    \begin{cases}
      m_{2}^7(e_1,e_1)=0,\\
      m_{2}^7(e_1,e_2)=0,\\
      m_{2}^7(e_2,e_2)=0.\\
    \end{cases}\)
  \end{itemize}
  If the coefficient field $\mathbb{K}$ is not algebraically closed, we have also the following algebra
  \[\begin{cases}
    m_{2,R}(e_1,e_1)=e_2,\\
    m_{2,R}(e_1,e_2)=\alpha_2e_1+\beta_2e_2,\\
    m_{2,R}(e_2,e_2)=\alpha_2\beta_2e_1+(\alpha_2+\beta_2^2)e_2.\\
  \end{cases}
  y^3\alpha_2\beta_2+y(\alpha_2+\beta_2^2)+1=0\text{ without roots}.\] 
  Besides, $m_2^1,m_2^3,m_2^4,m_2^5$ are unital while others are not.
\end{theorem}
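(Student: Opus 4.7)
The plan is to follow the idempotent-based classification strategy used by Remm and Goze in \cite{EM2017}, adapted to the peculiarities of characteristic $2$. The key simplification in this setting is that for any element $v = xe_1 + ye_2$, the cross term in $v^2$ vanishes, giving $v^2 = x^2 m(e_1,e_1) + y^2 m(e_2,e_2)$. This makes the idempotency equation $v^2 = v$ semi-linear (Frobenius-like) rather than bilinear, which is in some respects easier to analyze than in odd characteristic; on the other hand, the absence of $\tfrac{1}{2}$ means we cannot split $m$ into symmetric and skew-symmetric parts, so the whole analysis must be done directly.

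First I would split into three cases by the number of linearly independent nontrivial idempotents in $A$. In Case (i), if $A$ admits two such idempotents, take them as $e_1, e_2$ with $m(e_i,e_i) = e_i$ and write $m(e_1,e_2) = \lambda_1 e_1 + \lambda_2 e_2$; Lemma~\ref{Associative and unital lemma} forces $\lambda_i = \lambda_i^2$ and $\lambda_1\lambda_2 = 0$, giving $(\lambda_1,\lambda_2) \in \{(0,0),(0,1),(1,0)\}$. A direct basis change identifies all three with $\mathbb{K}\times\mathbb{K}$, i.e.\ with $m_2^3$ (and equivalently with $m_2^1$ after relabelling). In Case (ii), with exactly one nontrivial idempotent, take it as $e_1$ and set $m(e_1,e_2) = b_1 e_1 + b_2 e_2$; the first associativity relation yields $b_2 = b_2^2$ and $b_1 b_2 = 0$. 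The branch $b_2 = 1$ forces $b_1 = 0$, so $e_1$ acts as a unit and $A \cong \mathbb{K}[y]/(y^2 + c_2 y + c_1)$, which normalizes to $m_2^4$ when $c_2 = 0$ and to $m_2^5$ when $c_2 \ne 0$ (after rescaling $e_2$), the stated condition on $x^2+x+\alpha_4$ expressing precisely the absence of a second idempotent. In the branch $b_2 = 0$ the substitution $e_2 \mapsto e_2 + b_1 e_1$ kills $m(e_1,e_2)$, and the second associativity relation forces $c_1 = 0$; permitting $c_2 \ne 0$ would turn a rescaling of $e_2$ into a second idempotent, so $c_2 = 0$ as well and we recover $m_2^2$. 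In Case (iii), with no nontrivial idempotent, I would observe that if some $v$ satisfies $v^2 = \mu v$ with $\mu \ne 0$ then $v/\mu$ is an idempotent; hence for every $v$ either $v^2 = 0$ or $v^2$ is independent of $v$. If every square vanishes, associativity forces all products to vanish and we get $m_2^7$; otherwise choose $e_1$ with $e_1^2 \ne 0$ and set $e_2 = e_1^2$, so that associativity reads $m(e_2,e_2) = m(e_1, m(e_1,e_2))$, which determines $m(e_2,e_2)$ uniquely from the coefficients $(\alpha_2,\beta_2)$ of $m(e_1,e_2)$, producing $m_{2,R}$ (with $m_2^6$ as the degenerate case $\alpha_2 = \beta_2 = 0$); the nonexistence of an idempotent $xe_1 + ye_2$ with $y \ne 0$ translates directly into the stated cubic having no root in $\mathbb{K}$.

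Once the classification is complete, the unital claim follows by inspection. Any identity $u$ must satisfy $u^2 = u$, so the idempotent-free algebras $m_2^6, m_2^7, m_{2,R}$ cannot be unital. For the remaining ones, $e_1$ serves as a unit in $m_2^1, m_2^4, m_2^5$, the element $e_1 + e_2$ works for $m_2^3$, and $m_2^2$ fails to be unital since its only nontrivial idempotent $e_1$ satisfies $m_2^2(e_1,e_2) = 0 \ne e_2$.

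The main obstacle I anticipate is Case (iii) over a non--algebraically--closed field: without a marked idempotent to pin the basis, one must carefully control which parameter changes are admissible and verify that the stated irreducibility conditions on $x^2+x+\alpha_4$ and on the cubic defining $m_{2,R}$ genuinely separate non-isomorphic algebras and carve out the idempotent-free sector. This is where Artin--Schreier theory plays, in characteristic $2$, the role that square-root existence plays in the odd-characteristic classification of \cite{EM2017}.
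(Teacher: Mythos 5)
Your proposal is correct and follows essentially the same route as the paper's proof: casework on the number of idempotents, reduction of associativity to the two relations of Lemma~\ref{Associative and unital lemma}, normalization by basis changes exploiting $(xe_1+ye_2)^2=x^2e_1^2+y^2e_2^2$ in characteristic $2$, and translation of idempotent-freeness into the stated root conditions. The only organizational difference is that you place $m_2^3\cong\mathbb{K}\times\mathbb{K}$ in the two-idempotent case (where it belongs, since $e_2$ is a second idempotent, and indeed $m_2^1\cong m_2^3$), whereas the paper derives it inside the single-idempotent branch; your version is if anything slightly cleaner, and the resulting lists agree.
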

\begin{proof}
Suppose $\mathbb{K}$ is a field with characteristic $2$, $A$ is a $2$-dimensional commutative associative $\mathbb{K}$-algebra. 
\begin{enumerate}
  \item If there are two independent idempotents, denoted by $e_1,e_2$. Then this algebra is generated by these two elements, with multiplication given by 
  \[\begin{cases}
    m_2(e_1,e_1)=e_1,\\
    m_2(e_1,e_2)=\alpha_2e_1+\beta_2e_2,\\
    m_2(e_2,e_2)=e_2.\\
  \end{cases}\]
  According to Lemma~\ref{Associative and unital lemma}, we know $A$ is associative if and only if 
  \[m_2(m_2(e_1,e_1),e_2)=m_2(e_1,m_2(e_1,e_2)),\]
  \[m_2(m_2(e_2,e_2),e_1)=m_2(e_2,m_2(e_2,e_1)).\]
  This is to say 
  \[\alpha_2^2=\alpha_2,\,\beta_2^2=\beta_2,\,\alpha_2\beta_2=0.\]
  Hence $(\alpha_2,\beta_2)$ can only be one of $(0,0),(0,1),(1,0)$, and for each case, the algebra has identity element $e_1+e_2, e_1$ or $e_2$ respectively. In addition, each of these three algebras is generated by an identity element and a nondegenerate idempotent, which implies that they are isomorphic to each other. Select a representative algebra in this isomorphic class as
  \[\begin{cases}
    m_2^1(e_1,e_1)=e_1,\\
    m_2^1(e_1,e_2)=e_2,\\
    m_2^1(e_2,e_2)=e_2.\\
  \end{cases}\]
  In this algebra, $e_1$ is the identity element and $e_2$ is the nondegenerate idempotent.
  \item Suppose there is only one idempotent, say $e_1$, we can represent the algebra by 
  \[\begin{cases}
    m_2(e_1,e_1)=e_1,\\
    m_2(e_1,e_2)=\alpha_2e_1+\beta_2e_2,\\
    m_2(e_2,e_2)=\alpha_4e_1+\beta_4e_2.\\
  \end{cases}\]
  Note that for each $k\in\mathbb{K}$
  \[m_2(e_1,(ke_1+e_2))=(\alpha_2+k)e_1+\beta_2e_2=(\alpha_2+k(\beta_2+1))e_1+\beta_2(ke_1+e_2),\]
  so that we replace $e_2$ by $ke_1+e_2$ and convert the coefficient $\alpha_2$ to $0$ if $\beta_2\neq 1$. 
  \begin{enumerate}
    \item If $\beta_2\neq 1$, then according to the analysis above, we can suppose $\alpha_2=0$ and the algebra becomes as 
    \[\begin{cases}
      m_2(e_1,e_1)=e_1,\\
      m_2(e_1,e_2)=\beta_2e_2,\\
      m_2(e_2,e_2)=\alpha_4e_1+\beta_4e_2.\\
    \end{cases}\]
    According to Lemma~\ref{Associative and unital lemma}, the associativity is equivalent to 
    \[m_2(m_2(e_1,e_1),e_2)=m_2(e_1,m_2(e_1,e_2)),\]
    \[m_2(m_2(e_2,e_2),e_1)=m_2(e_2,m_2(e_2,e_1)).\]
    The first equality gives out that $\beta_2^2=\beta_2$, so $\beta_2=0$ since $\beta_2\neq1$. On the other hand, the second equality implies that $\alpha_4=0$. Now the algebra becomes as 
    \[\begin{cases}
      m_2(e_1,e_1)=e_1,\\
      m_2(e_1,e_2)=0,\\
      m_2(e_2,e_2)=\beta_4e_2.\\
    \end{cases}\]
    Depends on whether $\beta_4=0$ or not, it is isomorphic to the first or second representative algebra as follows. 
    \[\begin{cases}
      m_2^{2}(e_1,e_1)=e_1,\\
      m_2^{2}(e_1,e_2)=0,\\
      m_2^{2}(e_2,e_2)=0.\\
    \end{cases}
    \quad 
    \begin{cases}
      m_2^{3}(e_1,e_1)=e_1,\\
      m_2^{3}(e_1,e_2)=0,\\
      m_2^{3}(e_2,e_2)=e_2.\\
    \end{cases}\]
    \item If $\beta_2=1$, then the algebra becomes as 
    \[\begin{cases}
      m_2(e_1,e_1)=e_1,\\
      m_2(e_1,e_2)=\alpha_2e_1+e_2,\\
      m_2(e_2,e_2)=\alpha_4e_1+\beta_4e_2.\\
    \end{cases}\]
    Then $m_2(m_2(e_1,e_1),e_2)=\alpha_2e_1+e_2$ and $m_2(e_1,m_2(e_1,e_2))=2\alpha_2e_1+e_2$. And the associativity of $A$ shows that $\alpha_2=2\alpha_2$, hence $\alpha_2=0$. In this way, $e_1$ is identity element and $A$ is associative by the second statement of Lemma~\ref{Associative and unital lemma}. 
    \par 
    For each $x,y\in\mathbb{K}$, the element $xe_1+ye_2$ is idempotent if and only if 
    \[xe_1+ye_2=x^2e_1+y^2(\alpha_4e_1+\beta_4e_2)=(x^2+y^2\alpha_4)e_1+y^2\beta_4e_2.\]
    As the only idempotent is $e_1$, the equation 
    $\begin{cases}
      x^2+y^2\alpha_4=x,\\
      y^2\beta_4=y.\\
    \end{cases}$
    should have no solutions other than 
    $\begin{cases}
      x=1,\\
      y=0.\\
    \end{cases}$ 
    and 
    $\begin{cases}
      x=0,\\
      y=0.\\
    \end{cases}$ When $y=0$, it is easy to found that $x$ can only be $0$ or $1$. When $y\neq 0$, we have $y=\beta_4^{-1}$ if $\beta_4\neq 0$, and the first equality can be rewritten as $x^2+x+\alpha_4\beta_4^{-2}=0$. And we can set the coefficient $\beta_4$ to be $1$ by replacing $e_2$ with $\beta_4^{-1}e_2$. While if $\beta_4=0$, it can be deduced that $y=0$ and $x^2=x$, which has no other solution indeed. In all, the algebras in this case is unital and isomorphic to
    \[\begin{cases}
      m_2^{4}(e_1,e_1)=e_1,\\
      m_2^{4}(e_1,e_2)=e_2,\\
      m_2^{4}(e_2,e_2)=\alpha_4e_1,\\
    \end{cases} \text{ or }\]
    \[\begin{cases}
      m_2^{5}(e_1,e_1)=e_1,\\
      m_2^{5}(e_1,e_2)=e_2,\\
      m_2^{5}(e_2,e_2)=\alpha_4e_1+e_2.\\
    \end{cases}
  x^2+x+\alpha_4=0\text{ has no roots in }\mathbb{K}\setminus\{0,1\}.\]
  \end{enumerate}   
  \item If there is no idempotent but exist an element in $A$ whose square is nonzero. Denote this element by $e_1$ and $m_2(e_1,e_1)$ by $e_2$. If $e_1$ and $e_2$ are not independent, say $e_2=ke_1$ with $k\in\mathbb{K^*}$, then $k^{-1}e_1$ is an idempotent, which is a contradiction. This means $e_1$ and $e_2$ form a basis of $A$ and we can represent the multiplication by 
  \[\begin{cases}
    m_2(e_1,e_1)=e_2,\\
    m_2(e_1,e_2)=\alpha_2e_1+\beta_2e_2,\\
    m_2(e_2,e_2)=\alpha_4e_1+\beta_4e_2.\\
  \end{cases}\]
  Then for each $x,y\in\mathbb{K}$, $xe_1+ye_2$ is idempotent if and only if 
  \[xe_1+ye_2=x^2e_2+y^2(\alpha_4e_1+\beta_4e_2)=y^2\alpha_4e_1+(x^2+y^2\beta_4)e_2,\]
  or equivalently, $\begin{cases}
    x=y^2\alpha_4,\\
    y^4\alpha_4^2+y^2\beta_4+y=0.\\
  \end{cases}$ As this algebra contains no idempotent, this equation has no solution rather than $x=y=0$, hence the equation 
  \begin{equation}
    y^3\alpha_4^2+y\beta_4+1=0
    \label{No idempotent 1}
  \end{equation}
has no solution in $\mathbb{K}$. 

Now let us turn to the associativity. It can be calculated directly that
  \begin{align*}
    m_2(m_2(e_1,e_1),e_2)
    &=\alpha_4e_1+\beta_4e_2,\\
    m_2(e_1,m_2(e_1,e_2))
    &=\alpha_2\beta_2e_1+(\alpha_2+\beta_2^2)e_2,\\
    m_2(m_2(e_2,e_2),e_1)
    &=\alpha_2\beta_4e_1+(\alpha_4+\beta_2\beta_4)e_2,\\
    m_2(e_2,m_2(e_2,e_1))
    &=(\alpha_2^2+\alpha_4\beta_2)e_1+(\alpha_2+\beta_4)\beta_2e_2.
  \end{align*}
  Then Lemma~\ref{Associative and unital lemma} shows that $\begin{cases}
    \alpha_4=\alpha_2\beta_2,\\
    \beta_4=\alpha_2+\beta_2^2,\\
    \alpha_2\beta_4=\alpha_2^2+\alpha_4\beta_2,\\
    \alpha_4+\beta_2\beta_4=(\alpha_2+\beta_4)\beta_2.\\
  \end{cases}$ Note that the third and fourth equalities always hold if the first and second do, hence this algebra is associative if and only if $\begin{cases}
    \alpha_4=\alpha_2\beta_2,\\
    \beta_4=\alpha_2+\beta_2^2.\\
  \end{cases}$ Then the multiplication can be written as 
  \[\begin{cases}
    m_2(e_1,e_1)=e_2,\\
    m_2(e_1,e_2)=\alpha_2e_1+\beta_2e_2,\\
    m_2(e_2,e_2)=\alpha_2\beta_2e_1+(\alpha_2+\beta_2^2)e_2.\\
  \end{cases}\]
  And the Equation~\ref{No idempotent 1} has no solution in $\mathbb{K}$ can be translated as 
  \[y^3\alpha_2^2\beta_2^2+y(\alpha_2+\beta_2^2)+1=0.\]

  When $\mathbb{K}$ is algebraically closed, this equation has no solution if and only if its all coefficients, $\alpha_2^2\beta_2^2$ and $\alpha_2+\beta_2^2$ vanish, which is equivalent to $\alpha_2=\beta_2=0.$ In this case, the algebra can be written as 
  \[\begin{cases}
    m_2^6(e_1,e_1)=e_2,\\
    m_2^6(e_1,e_2)=0,\\
    m_2^6(e_2,e_2)=0.\\
  \end{cases}\]
  If $\mathbb{K}$ is not algebraically closed, we have to stop at 
  \[\begin{cases}
    m_{2,R}(e_1,e_1)=e_2,\\
    m_{2,R}(e_1,e_2)=\alpha_2e_1+\beta_2e_2,\\
    m_{2,R}(e_2,e_2)=\alpha_2\beta_2e_1+(\alpha_2+\beta_2^2)e_2,\\
  \end{cases}
  y^3\alpha_2^2\beta_2^2+y(\alpha_2+\beta_2^2)+1=0\text{ without roots}.\]
  \item If $m_2(v, v)=0$ for any $v\in A$, then we can represent the algebra by 
  \[\begin{cases}
    m_{2}(e_1,e_1)=0,\\
    m_{2}(e_1,e_2)=\alpha_2e_1+\beta_2e_2,\\
    m_{2}(e_2,e_2)=0.\\
  \end{cases}\]
  While we obtain $\alpha_2=\beta_2=0$ from the associativity. So the last kind of algebra degenerates as 
  \[\begin{cases}
    m_{2}^7(e_1,e_1)=0,\\
    m_{2}^7(e_1,e_2)=0,\\
    m_{2}^7(e_2,e_2)=0.\\
  \end{cases}\]
\end{enumerate}
\end{proof}

Note that the polynomial that has no roots for $m_{2,R}$ is nothing but the the polynomial $P_A(y)$ modulated by $2$. Back to the case with general field, combining Theorem \ref{Theorem in EM2017}, Proposition \ref{The further proposition with characteristic other than 2} and Theorem \ref{The classification with characteristic being 2}, we obtain the following corollary.
\begin{corollary}
Suppose $\mathbb{K}$ is a field, $A$ is a commutative 2-dimensional associative $\mathbb{K}$-algebra and the multiplication $m: A\otimes A\to A$ is surjective, then $A$ is unital except being isomorphic to $A_R$, where the multiplication $m_R$ is given by 
  \begin{align*}
    m=m_R:&A\otimes A\to A,\\
    &e_1\otimes e_1\mapsto e_2,\\
    &e_1\otimes e_2\mapsto \alpha_2e_1+\beta_2e_2,\\
    &e_2\otimes e_2\mapsto \alpha_2\beta_2e_1+(\alpha_2+\beta_2^2)e_2, 
  \end{align*}
  here the coefficients $\alpha_2,\beta_2\in\mathbb{K}$ make the polynomial
  \[P_R(y)=-1+y(5\alpha_2+\beta_2^2)+y^2(-8\alpha_2^2-2\alpha_2\beta_2^2)+y^3(4\alpha_2^3+\alpha_2^2\beta_2^2)\]
  has no roots in $\mathbb{K}$. 
\end{corollary}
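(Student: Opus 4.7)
The plan is to combine the classification Theorem~\ref{Theorem in EM2017} with Proposition~\ref{The further proposition with characteristic other than 2} for characteristic different from $2$, together with Theorem~\ref{The classification with characteristic being 2} for characteristic $2$. Between them, these list every commutative associative rank-$2$ $\mathbb{K}$-algebra up to isomorphism along with its associativity and unitality status. Given this inventory, the corollary is proved by running through the associative representatives and retaining only those with surjective multiplication.

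First I would discard every associative representative whose multiplication image is at most one-dimensional. In characteristic $\neq 2$ these are $m^9$ with $\beta_2=0$, $m^{10}$ with $\alpha_4=1$, $m^{11}_R$ with $\lambda_2=0$, $m^{12}$, $m^{13}$, $m^{14}$, and $m^{17}$; in characteristic $2$ they are $m_2^2$, $m_2^6$, $m_2^7$. Each of these fails the surjectivity hypothesis and so does not appear in the conclusion.

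Next I would invoke the explicit unitality statements already established for the surviving associative classes. In characteristic $\neq 2$, the algebras $m^6$ with $(\alpha_2,\beta_2)\in\{(0,0),(0,1),(1,0)\}$, $m^9$ with $\beta_2=1$, and $m^{8,2}_R$ are unital by Proposition~\ref{The further proposition with characteristic other than 2}; in characteristic $2$, the algebras $m_2^1$, $m_2^3$, $m_2^4$, $m_2^5$ are unital by Theorem~\ref{The classification with characteristic being 2}. In each of these the existence of a unit together with a generating pair yields surjectivity of $m$ at once, so they fall into the unital part of the conclusion.

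The only associative classes not yet accounted for are $m^{15,1}_R$ (characteristic $\neq 2$) and $m_{2,R}$ (characteristic $2$), which are precisely the multiplication $m_R$ displayed in the corollary. By Remark~\ref{Idempotent remark} neither has an idempotent, hence by Lemma~\ref{Associative and unital lemma}(2) neither is unital; surjectivity of $m_R$ forces $\alpha_2\neq 0$, since the image is spanned by $\{e_2,\,\alpha_2 e_1+\beta_2 e_2,\,\alpha_2\beta_2 e_1+(\alpha_2+\beta_2^2)e_2\}$ and collapses into $\mathbb{K}e_2$ when $\alpha_2=0$. A final consistency check notes that $P_R(y)$ reduces modulo $2$ to the polynomial constraint on $m_{2,R}$ in Theorem~\ref{The classification with characteristic being 2}, so the characteristic $2$ and characteristic $\neq 2$ exceptions are uniformly captured by the single formula for $A_R$. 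No conceptual obstacle is expected; the only difficulty is the length of the case list and the care needed to cross-reference the two classification theorems without missing an isomorphism class.
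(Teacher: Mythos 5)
Your proposal is correct and follows essentially the same route as the paper, which proves this corollary simply by combining Theorem~\ref{Theorem in EM2017}, Proposition~\ref{The further proposition with characteristic other than 2} and Theorem~\ref{The classification with characteristic being 2}; your case-by-case elimination of the associative representatives with one-dimensional image and identification of $m^{15,1}_R$ and $m_{2,R}$ as the sole remaining non-unital classes is exactly the intended (and in the paper, unwritten) argument. The only cosmetic slip is attributing the absence of idempotents in $m_{2,R}$ to Remark~\ref{Idempotent remark}, which concerns the characteristic-$\neq 2$ list; the correct source is the case analysis inside the proof of Theorem~\ref{The classification with characteristic being 2}.
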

Now we make some further analysis to the polynomial $P_R(y)$. If the coefficient $\alpha_2=0$, then the multiplication is not surjective. If $\alpha_2\neq 0$, then $y=\alpha_2^{-1}$ is a solution of $P_A(y)=0$. So we obtain the following theorem, which can be regarded as Theorem \ref{theorem1.2} with field coefficients.

\begin{theorem}
Any rank-$2$ commutative associative algebra with field coefficients and surjective multiplication is unital.
\label{Unital theorem}
\end{theorem}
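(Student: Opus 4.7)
The plan is to invoke the Corollary immediately preceding this theorem, which already does most of the work: under commutativity and associativity, any 2-dimensional $\mathbb{K}$-algebra with surjective multiplication is either unital or else isomorphic to a member of the exceptional family $A_R$, with multiplication $m_R$ depending on parameters $\alpha_2, \beta_2 \in \mathbb{K}$ subject to $P_R(y)$ having no roots in $\mathbb{K}$. So the theorem reduces to showing that no $A_R$ can actually have surjective multiplication, and I would dispatch this by splitting on whether $\alpha_2$ is zero.

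If $\alpha_2 = 0$, inspection of the multiplication table gives $m_R(e_1 \otimes e_1) = e_2$, $m_R(e_1 \otimes e_2) = \beta_2 e_2$, and $m_R(e_2 \otimes e_2) = \beta_2^2 e_2$, so the image of $m_R$ is contained in the one-dimensional subspace $\mathbb{K}e_2 \subsetneq A$, contradicting surjectivity. If instead $\alpha_2 \neq 0$, then $\alpha_2^{-1} \in \mathbb{K}$ is a root of $P_R$; this is a quick verification, since the $\beta_2$-free contribution to $P_R(\alpha_2^{-1})$ is $-1 + 5 - 8 + 4 = 0$, and the $\beta_2^2$-contribution is $\beta_2^2 \alpha_2^{-1}(1 - 2 + 1) = 0$. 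This contradicts the hypothesis that $P_R$ has no roots in $\mathbb{K}$, completing the case analysis and proving the theorem.

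The only content beyond the Corollary is the cancellation identity $P_R(\alpha_2^{-1}) = 0$, which is mechanical, so there is no genuine obstacle at this step. The substantive work of the section lives in Proposition \ref{The further proposition with characteristic other than 2} and Theorem \ref{The classification with characteristic being 2}; by the time one reaches the present theorem, it is essentially a two-line corollary of that classification.
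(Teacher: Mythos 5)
Your proposal is correct and is essentially identical to the paper's own argument: the paper likewise reduces to the exceptional family $A_R$ via the preceding corollary, notes that $\alpha_2=0$ forces the image of $m_R$ into $\mathbb{K}e_2$, and that $\alpha_2\neq 0$ makes $y=\alpha_2^{-1}$ a root of $P_R$. Your verification of the cancellation $P_R(\alpha_2^{-1})=0$ is accurate and just makes explicit what the paper leaves to the reader.
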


\begin{remark}
The subjectivity of multiplication is necessary for the existence of identity element. While we do not know whether the rank-$2$ condition can be extended to higher ranks.
\end{remark}

\subsection{From field to PID}
In this subsection, first we extend Theorem~\ref{Unital theorem} to the case with PID coefficients and complete the proof of Theorem \ref{theorem1.2}.
\begin{proof}
  Suppose $A$ is a rank-$2$ $R$-algebra generated by $e_1$ and $e_2$ with surjective multiplication $m$ and $R$ is a principal ideal domain. Denote the quotient field of $R$ by $\overline{R}$, and the algebra generated by $e_1$ and $e_2$ under $\overline{R}$ by $\overline{A}$. The multiplication on $A$ can be extended to $\overline{A}$ naturally and denoted by $\overline{m}$, which is also commutative, associative and surjective as a linear map from $\overline{A}\otimes \overline{A}$ to $\overline{A}$. Now $(\overline{A},\overline{m})$ is a rank-$2$ associative algebra with field coefficient and surjective multiplication, hence it is unital by Theorem~\ref{Unital theorem}. Denote this identity element by $1_{\overline{A}}$ and we know that there exist $r_{11},r_{21}\in R$ and $r_{12},r_{22}\in R^*$ such that $1_{\overline{A}}=\frac{r_{11}}{r_{12}}e_1+\frac{r_{21}}{r_{22}}e_2$. Without the loss of generality, we may suppose 
  \[a\cdot 1_{\overline{A}}=r_1e_1+r_2e_2,\,(a,r_1,r_2)\text{ are prime in }R. \]
  As before, we still represent the multiplication by 
  \[\begin{cases}
    e_1\cdot e_1=\alpha_1e_1+\beta_1e_2,\\
    e_1\cdot e_2=\alpha_2e_1+\beta_2e_2,\\
    e_2\cdot e_2=\alpha_4e_1+\beta_4e_2,\\
  \end{cases}\]
  where we use the notation $\cdot$ as there is no need to take apart different kinds of multiplications. 

As $1_{\overline{A}}$ is the identity element of $\overline{A}$, we have that 
  \[ae_1=(r_1e_1+r_2e_2)\cdot e_1=(r_1\alpha_1+r_2\alpha_2)e_1+(r_1\beta_1+r_2\beta_2)e_2,\]
  \[ae_2=(r_1e_1+r_2e_2)\cdot e_2=(r_1\alpha_2+r_2\alpha_4)e_1+(r_1\beta_2+r_2\beta_4)e_2.\]
This is to say, 
  \[\begin{cases}
    r_1\alpha_2+r_2\alpha_4=0,\\
    r_1\beta_1+r_2\beta_2=0,\\
    r_1(\alpha_1-\beta_2)+r_2(\alpha_2-\beta_4)=0,\\
    a=r_1\alpha_1+r_2\alpha_2.\\
  \end{cases}\]
Suppose $(r_1, r_2)r_1'=r_1$ and $(r_1, r_2)r_2'=r_2$ then there exist $u, v, t\in R$ such that 
  \[\alpha_2=ur_2',\quad \alpha_4=-ur_1',\]
  \[\beta_1=vr_2',\quad \beta_2=-vr_1',\]
  \[\alpha_1-\beta_2=tr_2',\quad \alpha_2-\beta_4=-tr_1'.\]
Therefore, we obtain 
  \begin{align*}
    a&=r_1\alpha_1+r_2\alpha_2\\
    &=(r_1,r_2)(r_1'(\beta_2+tr_2')+r_2'\alpha_2)\\
    &=(r_1,r_2)(r_1'(-vr_1'+tr_2')+r_2'\alpha_2)\\
    &=(r_1,r_2)(-vr_1'^2+tr_1'r_2'+ur_2'^2).
  \end{align*}
Combine this with $(a, r_1, r_2)=1$ in $R$, we obtain that $(r_1, r_2)=1$ and $r_{1, 2}'=r_{1, 2}$. 

As $R$ is a principal ideal domain, B\'{e}zout's lemma tells us that there exist $p, q\in R$, such that $qr_1-pr_2=1$, which guarantees that $e_1$ and $e_2$ can be obtained by $R$-linear combinations of $a\cdot1_{\overline{A}}=r_1e_1+r_2e_2$ and $pe_1+qe_2$. This is to say, 
  \[A=\langle u_a,x\rangle,\text{ where }u_a=a\cdot 1_{\overline{A}},\,x=pe_1+qe_2. \]
  And the multiplication, respect to $u_a$ and $x$ could be represented by 
  \[\begin{cases}
    u_a\cdot u_a=au_a,\\
    u_a\cdot x=ax,\\
    x\cdot x=hx+lu_a.\\
  \end{cases}h,l\in R.\]
  Then it is surjective if and only if there exist $b, c, d, e, f, g\in R$ such that 
  \[\begin{pmatrix}
    b&c&d\\
    e&f&g
  \end{pmatrix}\begin{pmatrix}
    a&0\\
    0&a\\
    l&h
  \end{pmatrix}=\begin{pmatrix}
    1&0\\
    0&1
  \end{pmatrix}.\]
  From $fa+gh=1$, we have $(a,h)=1$. Combine this with $ca+dh=0$, we know there exists $m\in R$ such that $c=mh$ and $d=-ma$. Notice that $ba+dl=1$, it can be finally deduced that
  \[a(b-ml)=1. \]
  So that $a$ is invertible in $R$ and $1_{\overline{A}}\in A$, hence $A$ is unital as desired. 
\end{proof}

Now we turn to the proof of Theorem \ref{theorem1.1}
\begin{proof}
Compare the definition of nearly Frobenius algebra and that of Frobenius algebra, we find that the only difference is the existence of the unit and counit. Now Theorem \ref{theorem1.2} suggests that for a 2-dimensional commutative associative algebra over a PID, if the multiplication is surjective, then it is unital. For the counit, denote the dual map of the comultiplication $\Delta: A\to A\otimes A$ by $\Delta^*: A^*\otimes A^*\to A^*$. Since $\Delta$ is injective, then $\Delta^*$ is surjective. Again, according to Theorem \ref{theorem1.2}, we find that $(A^*, \Delta^*)$ is unital. It follows that $(A, \Delta)$ has a counit.
\end{proof}

\begin{remark}
We remark that very recently, Asrorov, Bekbaev and Rakhimov classified all possible rank-2 Frobenius algebra structures over a basic field. The reader is referred to \cite{ABR2023} for more details.
\end{remark}

\section{Noncommutative case}\label{section4}
It is natural to ask what if we delete the condition of commutativity, whether the existence of the identity element is still valid. The following theorem suggests that there does exist an element $e_2$ which acts like an identity respect to partial multiplication.

\begin{theorem}
If (A, m) is a rank-$2$ algebra over a field $\mathbb{K}$, with associative, surjective, noncommutative multiplication $m: A\otimes A\to A$, then $A$ is spanned by $e_1$ and $e_2$ such that the multiplication structure $m$ is given by
  $$\begin{cases}
    e_1\otimes e_1\mapsto 0,\\
    e_1\otimes e_2\mapsto 0,\\
    e_2\otimes e_1\mapsto e_1,\\
    e_2\otimes e_2\mapsto e_2;
  \end{cases}\quad \text{or}\quad \begin{cases}
    e_1\otimes e_1\mapsto 0,\\
    e_1\otimes e_2\mapsto e_1,\\
    e_2\otimes e_1\mapsto 0,\\
    e_2\otimes e_2\mapsto e_2. 
  \end{cases}$$
\end{theorem}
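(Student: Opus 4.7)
The plan is to extract a nonzero idempotent that serves as a one-sided identity, and then read off the two tables from a carefully chosen basis. Since the surjectivity of $m$ forces $A^2 = A$, the algebra $A$ is not nilpotent; as $A$ is finite-dimensional over a field, a standard argument (take any non-nilpotent element of $A$ and pass to the commutative subalgebra it generates, which must contain a nonzero idempotent) produces a nonzero idempotent $e \in A$.

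I would next examine the principal ideals $Ae$ and $eA$, each of dimension $1$ or $2$ and each containing $e$. If $Ae = eA = A$, then $e$ is a two-sided identity; but any $2$-dimensional unital associative algebra over $\mathbb{K}$ is isomorphic to $\mathbb{K}[x]/(p(x))$ and hence commutative, contradicting the hypothesis. Therefore at least one of $Ae$, $eA$ is $1$-dimensional.

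Consider the asymmetric case $Ae = A$, $eA = \mathbb{K}e$, so that $e$ is a right identity. For any $f \in A \setminus \mathbb{K}e$ one has $fe = f$ and $ef = \alpha e$ for some $\alpha \in \mathbb{K}$, and the associativity identity $(fe)f = f(ef)$ forces $f^2 = \alpha f$. Setting $e_1 := f - \alpha e$ and $e_2 := e$ then produces a basis with $e_1^2 = 0$, $e_1 e_2 = e_1$, $e_2 e_1 = 0$, $e_2 e_2 = e_2$, which is precisely the second multiplication table. The mirror subcase $eA = A$, $Ae = \mathbb{K}e$ yields the first table by the same argument applied to the opposite algebra.

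It remains to collapse the doubly-degenerate case $Ae = eA = \mathbb{K}e$. Define linear functionals $\lambda, \mu \colon A \to \mathbb{K}$ by $xe = \lambda(x)e$ and $ex = \mu(x)e$, pick $w \notin \mathbb{K}e$, and replace it by $w' = w - \lambda(w)e$ so that $w'e = 0$. If $ew' = 0$ too, then associativity on the triple $(e, w', w')$ combined with $\dim A = 2$ shows that $e$ commutes with $w'$ and $w'^2 \in \mathbb{K}w'$, forcing $A$ to be commutative — a contradiction. Otherwise, a rescaling yields $w''$ with $ew'' = e$ and $w''e = 0$, and associativity then forces $(w'')^2 = w''$; but now $e = ew'' \in Aw''$ gives $Aw'' = A$, so $w''$ is a right identity and we fall back into the asymmetric case above with $w''$ replacing $e$. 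The main obstacle is exactly this last step: squeezing associativity hard enough in the symmetric degenerate case to force either commutativity or the emergence of a second idempotent that is a one-sided identity.
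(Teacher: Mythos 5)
Your proof is correct, and it takes a genuinely different route from the paper's. The paper splits into $\mathrm{char}\,\mathbb{K}\neq 2$ (where it decomposes $m$ into symmetric and skew-symmetric parts, normalizes the skew part so that $m_a(e_1,e_2)=e_1$, and then solves the resulting system of associativity equations on the structure constants) and $\mathrm{char}\,\mathbb{K}=2$ (a separate direct computation with general structure constants), using surjectivity only at the end to discard degenerate solutions. You instead use surjectivity up front to get $A^2=A$, extract a nonzero idempotent $e$ via the standard fact that a finite-dimensional associative algebra that is not nil contains one, and then run a Peirce-style analysis of the one-sided ideals $Ae$ and $eA$: the $(2,2)$ case gives a two-sided identity and hence commutativity, the $(2,1)$ and $(1,2)$ cases give a one-sided identity from which the two tables drop out by the change of basis $e_1=f-\alpha e$, and in the $(1,1)$ case you correctly show that either the algebra is commutative or a second idempotent $w''$ with $Aw''=A$ emerges, reducing to the asymmetric case. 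I checked the computations in each branch (in particular $f^2=\alpha f$ from $(fe)f=f(ef)$, the vanishing of $e_1^2$, and $\gamma=0$ from $(ew')w'=e(w'^2)$ and $(w''w'')e=w''(w''e)$) and they all hold; despite your closing remark flagging the symmetric degenerate case as the main obstacle, you have in fact completed that step. What your approach buys is uniformity in the characteristic and a conceptual explanation of why exactly two opposite isomorphism classes appear (left identity versus right identity); what it costs is the appeal to the idempotent-existence fact, which for a rank-$2$ algebra you could also verify by hand, whereas the paper's argument, though longer and case-ridden, is entirely elementary linear algebra on structure constants.
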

\begin{proof}
If $\mathrm{char}\,\mathbb{K}\neq 2$, then the multiplication can be decomposed as the sum of the skew-symmetric part $m_a=\frac{1}{2}(m-m\circ \mathrm{Perm})\neq 0$ and the symmetric part $m_s=\frac{1}{2}(m+m\circ\mathrm{Perm})$. 

We claim that there exist $e_1$ and $e_2$ generate $A$ such that $m_a(e_1,e_2)=e_1$. In fact, because $m_a\neq 0$, there must exist $e_1'$ and $e_2'$ in $A$ such that $m_a(e_1',e_2')\neq 0$. As $m_a$ is skew-symmetric, $e_1'$ and $e_2'$ are linear independent hence generate $A$. Without loss of generality, suppose $m_a(e_1',e_2')=xe_1'+ye_2'$, where $x\neq 0$, then
  $$m_a(e_1'+x^{-1}ye_2',x^{-1}e_2')=e_1'+x^{-1}ye_2'.$$
So that $e_1'+x^{-1}ye_2'$ and $x^{-1}e_2'$ are exactly what we want. 

Suppose $m_s: e_1\otimes e_1\mapsto \alpha_1e_1+\beta_1e_2,\,e_1\otimes e_2\mapsto \alpha_2e_1+\beta_2e_2,\,e_2\otimes e_2\mapsto \alpha_4e_1+\beta_4e_2,$ then we have 
  \begin{align*}
    m=m_s+m_a:& e_1\otimes e_1\mapsto \alpha_1e_1+\beta_1e_2,\\
    &e_1\otimes e_2\mapsto (\alpha_2+1)e_1+\beta_2e_2,\\
    &e_2\otimes e_1\mapsto (\alpha_2-1)e_1+\beta_2e_2,\\
    &e_2\otimes e_2\mapsto \alpha_4e_1+\beta_4e_2. 
  \end{align*}

  Now we analyze the associativity of this algebra and we write $m(a,b)$ as $a\cdot b$.
  \begin{flalign*}
    0=&(e_1\cdot e_1)\cdot e_1-e_1\cdot(e_1\cdot e_1)&\\
    =&(\alpha_1e_1+\beta_1e_2)\cdot e_1-e_1\cdot (\alpha_1e_1+\beta_1e_2)&\\
    =&\beta_1(e_2\cdot e_1-e_1\cdot e_2)&\\
    =&-2\beta_1e_1\Longrightarrow\beta_1=0&\\
  \end{flalign*}
 \begin{flalign*}
    0=&(e_1\cdot e_1)\cdot e_2-e_1\cdot(e_1\cdot e_2)&\\
    =&\alpha_1e_1\cdot e_2-e_1\cdot((\alpha_2+1)e_1+\beta_2e_2)&\\
    =&(\alpha_1-\beta_2)e_1\cdot e_2-(\alpha_2+1)e_1\cdot e_1&\\
    =&(\alpha_1-\beta_2)((\alpha_2+1)e_1+\beta_2e_2)-(\alpha_2+1)\alpha_1 e_1&\\
    =&-(\alpha_2+1)\beta_2e_1+(\alpha_1-\beta_2)\beta_2e_2\Longrightarrow(\alpha_2+1)\beta_2=(\alpha_1-\beta_2)\beta_2=0&\\
 \end{flalign*}
 \begin{flalign*}
    0=&(e_2\cdot e_1)\cdot e_1-e_2\cdot(e_1\cdot e_1)&\\
    =&((\alpha_2-1)e_1+\beta_2e_2)\cdot e_1-e_2\cdot \alpha_1e_1&\\
    =&(\beta_2-\alpha_1)e_2\cdot e_1+(\alpha_2-1)e_1\cdot e_1&\\
    =&(\beta_2-\alpha_1)((\alpha_2-1)e_1+\beta_2e_2)+(\alpha_2-1)\alpha_1 e_1&\\
    =&(\alpha_2-1)\beta_2e_1+(\beta_2-\alpha_1)\beta_2e_2\Longrightarrow(\alpha_2-1)\beta_2=(\beta_2-\alpha_1)\beta_2=0&\\
 \end{flalign*}
Now we conclude that $\beta_2=0$ from $(\alpha_2+1)\beta_2=(\alpha_2-1)\beta_2=0$. For the fourth case, we have
 \begin{flalign*}
    0=&(e_1\cdot e_2)\cdot e_1-e_1\cdot(e_2\cdot e_1)&\\
    =&(\alpha_2+1)e_1\cdot e_1-e_1\cdot (\alpha_2-1)e_1&\\
    =&2e_1\cdot e_1\Longrightarrow \alpha_1=0&\\
  \end{flalign*}
  Then the multiplication can be written as 
  \begin{align*}
    m=m_s+m_a: & e_1\otimes e_1\mapsto 0,\\
    &e_1\otimes e_2\mapsto (\alpha_2+1)e_1,\\
    &e_2\otimes e_1\mapsto (\alpha_2-1)e_1,\\
    &e_2\otimes e_2\mapsto \alpha_4e_1+\beta_4e_2. 
  \end{align*}
  We continue to go on analysis:
  \begin{flalign*}
    0=&(e_2\cdot e_2)\cdot e_1-e_2\cdot(e_2\cdot e_1)&\\
    =&(\alpha_4e_1+\beta_4e_2)\cdot e_1-e_2\cdot (\alpha_2-1)e_1&\\
    =&-(\alpha_2-\beta_4-1)(\alpha_2-1)e_1\Longrightarrow(\alpha_2-\beta_4-1)(\alpha_2-1)=0&\\
    0=&(e_1\cdot e_2)\cdot e_2-e_1\cdot(e_2\cdot e_2)&\\
    =&(\alpha_2+1)e_1\cdot e_2-e_1\cdot (\alpha_4e_1+\beta_4e_2)&\\
    =&(\alpha_2-\beta_4+1)(\alpha_2+1)e_1\Longrightarrow(\alpha_2-\beta_4+1)(\alpha_2+1)=0&
 \end{flalign*}
Since $(\alpha_2-\beta_4-1)(\alpha_2-1)=(\alpha_2-\beta_4+1)(\alpha_2+1)=0$, then $(\alpha_2,\beta_4)\in\{(1, 2), (-1, -2)\}$.
 \begin{flalign*}
    0=&(e_2\cdot e_1)\cdot e_2-e_2\cdot(e_1\cdot e_2)&\\
    =&(\alpha_2-1)e_1\cdot e_2-(\alpha_2+1)e_2\cdot e_1&\\
    =&0&\\
    0=&(e_2\cdot e_2)\cdot e_2-e_2\cdot(e_2\cdot e_2)&\\
    =&(\alpha_4e_1+\beta_4e_2)\cdot e_2-e_2\cdot(\alpha_4e_1+\beta_4e_2)&\\
    =&\alpha_4(e_1\cdot e_2-e_2\cdot e_1)&\\
    =&2\alpha_4e_1\Longrightarrow\alpha_4=0&
  \end{flalign*}
  So that the multiplication is given by 
  $$\begin{cases}
    e_1\cdot e_1=0,\\
    e_1\cdot e_2=2e_1,\\
    e_2\cdot e_1=0,\\
    e_2\cdot e_2=2e_2;\\
  \end{cases}\text{ or }\begin{cases}
    e_1\cdot e_1=0,\\
    e_1\cdot e_2=0,\\
    e_2\cdot e_1=-2e_1,\\
    e_2\cdot e_2=-2e_2.\\
  \end{cases}$$
  And they are isomorphic to the two algebraic structures stated in the theorem.
  
If $\mathrm{char}\,\mathbb{K}=2$, suppose that $\begin{cases}
    e_1\cdot e_1=\alpha_1e_1+\beta_1e_2,\\
    e_1\cdot e_2=\alpha_2e_1+\beta_2e_2,\\
    e_2\cdot e_1=\alpha_3e_1+\beta_3e_2,\\
    e_2\cdot e_2=\alpha_4e_1+\beta_4e_2;\\
  \end{cases}$
  the associativity implies that 
  \begin{align*}
    0=&(e_1\cdot e_1)\cdot e_1+e_1\cdot(e_1\cdot e_1)
    =(\alpha_1e_1+\beta_1e_2)\cdot e_1+e_1\cdot(\alpha_1e_1+\beta_1e_2)\\
    =&2\alpha_1e_1\cdot e_1+\beta_1(e_1\cdot e_2+e_2\cdot e_1)=(\alpha_2+\alpha_3)\beta_1e_1+\beta_1(\beta_2+\beta_3)e_2. 
  \end{align*}
  This shows that if $\beta_1\neq 0$, then $\alpha_2=\alpha_3$ and $\beta_2=\beta_3$, which is conflict to the noncommutative assumption. Hence it deduces that $\beta_1=0$. In the same way, we obtain $\alpha_4=0$ from $0=(e_2\cdot e_2)\cdot e_2+e_2\cdot(e_2\cdot e_2)$. In addition, based on 
  \begin{align*}
    0=&(e_1\cdot e_2)\cdot e_1+e_1\cdot(e_2\cdot e_1)
    =(\alpha_2e_1+\beta_2e_2)\cdot e_1+e_1\cdot(\alpha_3e_1+\beta_3e_2)\\
    =&(\alpha_2+\alpha_3)\alpha_1e_1+\beta_2(\alpha_3e_1+\beta_3e_2)+\beta_3(\alpha_2e_1+\beta_2e_2)\\
    =&((\alpha_1+\beta_2)\alpha_3+(\alpha_1+\beta_3)\alpha_2)e_1,\\
    0=&(e_1\cdot e_1)\cdot e_2+e_1\cdot(e_1\cdot e_2)
    =\alpha_1e_1\cdot e_1+e_1\cdot(\alpha_2e_1+\beta_2e_2)\\
    =&(\alpha_1+\beta_2)(\alpha_2e_1+\beta_2e_2)+\alpha_1\alpha_2e_1=\alpha_2\beta_2e_1+(\alpha_1+\beta_2)\beta_2e_2,\\
    0=&(e_2\cdot e_1)\cdot e_1+e_2\cdot(e_1\cdot e_1)
    =(\alpha_3e_1+\beta_3e_2)\cdot e_1+e_2\cdot \alpha_1e_1\\
    =&(\alpha_1+\beta_3)(\alpha_3e_1+\beta_3e_2)+\alpha_1\alpha_3e_1=\alpha_3\beta_3e_1+(\alpha_1+\beta_3)\beta_3e_2,
  \end{align*}
  we have $(\alpha_1+\beta_2)\alpha_3+(\alpha_1+\beta_3)\alpha_2=0$, $\begin{cases}
    \alpha_2\beta_2=0,\\
    (\alpha_1+\beta_2)\beta_2=0,\\
  \end{cases}$ $\begin{cases}
    \alpha_3\beta_3=0,\\
    (\alpha_1+\beta_3)\beta_3=0.\\
  \end{cases}$ Exchange the position of $e_1$ and $e_2$, and the left conditions of associativity tell us that $(\beta_4+\alpha_3)\beta_2+(\beta_4+\alpha_2)\beta_3=0$, $(\beta_4+\alpha_3)\alpha_3=0$ and $(\beta_4+\alpha_2)\alpha_2=0$. In all the associativity conditions are translated into following equations
  \begin{equation}
    \label{AssCondition}
    \begin{cases}
      \beta_1=\alpha_4=0,\\
      \alpha_2\beta_2=0,\\
      \alpha_3\beta_3=0,\\
      (\alpha_1+\beta_2)\beta_2=0,\\
      (\alpha_1+\beta_3)\beta_3=0,\\
      \alpha_2(\alpha_2+\beta_4)=0,\\
      \alpha_3(\alpha_3+\beta_4)=0,\\
      (\alpha_1+\beta_2+\beta_3)(\alpha_2+\alpha_3)=0,\\
      (\alpha_2+\alpha_3+\beta_4)(\beta_2+\beta_3)=0,\\
    \end{cases}
  \end{equation}
where the penultimate one equation is deduced by adding $\alpha_2\beta_2+\alpha_3\beta_3=0$ to $(\alpha_1+\beta_2)\alpha_3+(\alpha_1+\beta_3)\alpha_2=0$ and the last one equation can be obtained similarly.
\begin{itemize}
    \item If $\alpha_2=\alpha_3=0$, Eq.\ref{AssCondition} $\Longrightarrow\begin{cases}
      (\alpha_1+\beta_2)\beta_2=0,\\
      (\alpha_1+\beta_3)\beta_3=0,\\
      \beta_4(\beta_2+\beta_3)=0.\\
    \end{cases}$
    If $\beta_2+\beta_3=0$, then $e_1\cdot e_2=e_2\cdot e_1$, which is conflict to the noncommutative assumption. So that $\beta_2+\beta_3\neq 0$ and we have 
    $\beta_4=0,\, (\beta_2,\beta_3)=(0,\alpha_1)\text{ or }(\alpha_1,0),\,\alpha_1\neq 0.$
    Hence the multiplication is given by 
    $$\begin{cases}
      e_1\cdot e_1=\alpha_1e_1,\\
      e_1\cdot e_2=0,\\
      e_2\cdot e_1=\alpha_1e_2,\\
      e_2\cdot e_2=0;\\
    \end{cases}\text{ or }\begin{cases}
      e_1\cdot e_1=\alpha_1e_1,\\
      e_1\cdot e_2=\alpha_1e_2,\\
      e_2\cdot e_1=0,\\
      e_2\cdot e_2=0.\\
    \end{cases}$$
    Use $\alpha_1^{-1}e_1$ and $e_2$ to substitute $e_2$ and $e_1$ then we obtain the multiplication structure in the theorem. 
    \item If $\alpha_2=0,\,\alpha_3\neq 0$, Eq.\ref{AssCondition} $\Longrightarrow\begin{cases}
      \beta_3=0,\\
      \alpha_3(\alpha_3+\beta_4)=0,\\
      (\alpha_1+\beta_2)\alpha_3=0,\\
    \end{cases}\Longrightarrow\begin{cases}
      \alpha_3=\beta_4,\\
      \beta_2=\alpha_1.\\
    \end{cases}$
    And the multiplication structure becomes $\begin{cases}
      e_1\cdot e_1=\alpha_1e_1,\\
      e_1\cdot e_2=\alpha_1e_2,\\
      e_2\cdot e_1=\beta_4e_1,\\
      e_2\cdot e_2=\beta_4e_2.\\
    \end{cases}$

    If $\alpha_1=\beta_4=0$, then the multiplication is a zero map, conflicting to the subjectivity assumption. 

    If only one of $\alpha_1$ and $\beta_4$ equals to zero, similar to the former case, it is isomorphic to the multiplication structure mentioned in the theorem under base change. 

    If $\alpha_1\neq 0,\, \beta_4\neq 0$, then we may use $\alpha_1^{-1}e_1$ and $\alpha_1^{-1}e_1-\beta_4^{-1}e_2$ to substitute $e_2$ and $e_1$. 
    \item If $\alpha_2\neq 0,\,\alpha_3=0$, the conclusion holds symmetrically for the similar reason in the former case. 
    \item If $\alpha_2\neq 0,\,\alpha_3\neq 0$, Eq.\ref{AssCondition} $\Longrightarrow\begin{cases}
      \beta_2=\beta_3=0,\\
      \alpha_3(\alpha_3+\beta_4)=0,\\
      \alpha_2(\alpha_2+\beta_4)=0,\\
      \alpha_1(\alpha_2+\alpha_3)=0.\\
    \end{cases}$
    As $m$ is noncommutative, $\alpha_2\neq \alpha_3$, which furtherly shows that 
    $\alpha_1=0,\,(\alpha_2,\alpha_3)=(0,\beta_4)\text{ or }(\beta_4,0),\,\beta_4\neq 0.$
    And the corresponding multiplication structure is given by $\begin{cases}
      e_1\cdot e_1=0,\\
      e_1\cdot e_2=0,\\
      e_2\cdot e_1=\beta_4e_1,\\
      e_2\cdot e_2=\beta_4e_2.\\
    \end{cases}$ or $\begin{cases}
      e_1\cdot e_1=0,\\
      e_1\cdot e_2=\beta_4e_1,\\
      e_2\cdot e_1=0,\\
      e_2\cdot e_2=\beta_4e_2,\\
    \end{cases}$ which is isomorphic to the ones occurs before after a scalar base change by $\beta_4^{-1}$. 
  \end{itemize}
\end{proof}

\section{Link Homology induced by almost 2D-TQFT}\label{Homology induced by almost 2D-TQFT}
In \cite{Kho2000}, Khovanov constructed a bi-graded homology which categorifies the Jones polynomial. This homology is obtained by constructing a bi-graded chain complex to each link diagram and calculating its homology groups. This section focuses on the following question: which kind of almost 2D-TQFT can be used to define a link homology?
\subsection{The chain complex of a link diagram}
Given a link diagram $D$ with crossings points $\{c_1, c_2, \cdots, c_n\}$, we can resolve each crossing by $0$-resolution or $1$-resolution, see Figure~\ref{fig:Resolutions}.
\begin{figure}[htbp]
  \centering 
  \begin{tikzpicture}[scale=1,use Hobby shortcut]
    \begin{knot}[clip width=3.7,]
    \strand (0.5,0.5)..(0,0)..(-0.5,-0.5);
    \strand (-0.5,0.5)..(0,0)..(0.5,-0.5);
    \end{knot}
    \draw[->] (1,0)--(3,0);
    \node at (2,0.3) {$0$-resolution};
    \draw[->] (-1,0)--(-3,0);
    \node at (-2,0.3) {$1$-resolution};
    \draw (3.5,-0.5)..++(0.01,0.01)..(3.9,0)..(3.5,0.5)..++(-0.01,0.01) (4.5,-0.5)..++(-0.01,0.01)..(4.1,0)..(4.5,0.5)..++(0.01,0.01);
    \draw (-3.5,-0.5)..++(-0.01,0.01)..(-4,-0.1)..(-4.5,-0.5)..++(-0.01,-0.01) (-3.5,0.5)..++(-0.01,-0.01)..(-4,0.1)..(-4.5,0.5)..++(-0.01,0.01); 
  \end{tikzpicture}
  \caption{$0$-resolution and $1$-resolution}
  \label{fig:Resolutions}
\end{figure}
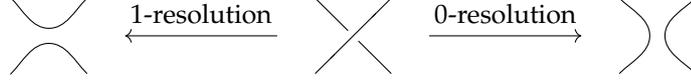

Now a state $s\in\{0, 1\}^n$ corresponds to a disjoint union of circles which can be regarded as an object in $\textbf{N2Cob}$. Let us denote it by $\Gamma_s$ and use $|s|$ to denote the number of 1's in $s$. Different states form a poset via $s_1<s_2$ if and only if all $1$-resolutions in $s_1$ are also $1$-resolutions in $s_2$. Each states can be seen as a vertex of a $n$-cube, and for each $s_1<s_2$, $|s_2|=|s_1|+1$, we assign to this edge a morphism of $\textbf{N2Cob}$ by the cobordism that is a saddle surface connecting the $0$-resolution in $s_1$ to the corresponding $1$-resolution in $s_2$, and a cylinder for other irrelevant circles. Denote this morphism by $W_{s_1}^{s_2}$. Then the cube $Cube(D)$ consisted by $s$'s and $W$'s is commutative by the property of $\textbf{N2Cob}$.

For two states $\{s_1, s_2\}$ with $s_1<s_2$ and $|s_2|=|s_1|+1$, define $\langle s_1, s_2\rangle$ to be the number of $1$-resolutions before the $0$-resolution in $s_1$ which is changed to a $1$-resolution in $s_2$. Now we can define the chain complex of the diagram $D$ as follows.
\begin{definition}
Suppose $D$ is a link diagram, its chain complex induced by an almost 2D-TQFT $\mathcal{F}^a$ is $C(D;\mathcal{F}^a)=(C^i(D;\mathcal{F}^a),d^i(D;\mathcal{F}^a))$, where the chain modules and differentials are given by 
  \[C^i(D;\mathcal{F}^a)=\mathop\oplus_{|s|=i}\mathcal{F}^a(\Gamma_s(D)),\quad d^i(D;\mathcal{F}^a)=\sum_{|s_1|=i}(-1)^{\langle s_1,s_2\rangle}\mathcal{F}^a(W_{s_1}^{s_2}).\]
  And the homology of this chain complex is 
  \[H^i(D;\mathcal{F}^a)=\ker d^i/\mathrm{Im}d^{i-1}.\]
\end{definition}

It can be checked that $d^{i+1}\circ d^i=0$ and different orders of the crossing points provide the same homology groups. Since different link diagrams represent the same link if and only if they are related by some Reidemeister moves. It follows that if for an almost 2D-TQFTs, the homology induced by it is preserved by Reidemeister moves, then its homology defines a link invariant.
 
\subsection{A review of Khovanov's universal theory}
In \cite{Kho2006}, Khovanov considered the question when a TQFT induces a homology which is invariant under the first Reidemeister move. He found such a TQFT, or the corresponding Frobenius algebra is of rank-2, hence the algebra $A$ is generated by the identity $1$ and another element $x$. In order to make this algebra satisfies all the conditions of a Frobenius algebra, he proved that it must be born from a universal Frobenius algebra $$A_4=R_4[x]/(x^2-hx-t),$$ where the coefficient ring 
$$R_4=\mathbb{Z}[a,c,e,f,h,t](ae-cf,af+chf-cet-1),$$
and the comultiplication and counit are defined as
$$\Delta(1)=(et-hf)1\otimes 1+ex\otimes x+f(1\otimes x+x\otimes 1),$$
$$\Delta(x)=ft1\otimes 1+et(1\otimes x+x\otimes 1)+(f+eh)x\otimes x,$$
$$\epsilon(1)=-c,\quad \epsilon(x)=a.$$ 
It is universal in the sense that for any rank-$2$ Frobenius algebra $A'$ with splitting element $x'$, there exists a unique homomorphism $\psi:A_4\to A'$ with $\psi(x)=x'$ that realizes $A'$ as a base change of $A_4$. While this is not the final algebra. Before giving the final one, we need to introduce the notion of \emph{twisting}.

Given a Frobenius algebra $A$ and an invertible element $y\in A$, we can twist the counit and comultiplication as $\epsilon'(x)=\epsilon(m(y,x))$, and $\Delta'(x)=\Delta(m(y^{-1},x))$ to obtain a new Frobenius algebra $A'=(A, \epsilon')$ whose unit and multiplication are preserved. Additionally, Khovanov proved that a twisting does not change the link homology induced by the corresponding TQFT \cite[Proposition 3]{Kho2006}.

\begin{remark}
Twisting by invertible elements is the only way to modify the counit and comultiplication in Frobenius algebras \cite[Theorem 1.6]{Ka1999}. 
\end{remark}

An important point is that the element $f+ex\in A_4$ is invertible with inverse $a+ch-cx$. Apply a twisting by this element to $A_4$ and we obtain
$$A_5=R_5[x]/(x^2-hx-t),$$
where the coefficient ring $R_4=\mathbb{Z}[h,t]$ and the comultiplication and counit are
$$\Delta(1)=1\otimes x+x\otimes 1-h1\otimes 1,$$
$$\Delta(x)=x\otimes x+t1\otimes 1,$$
$$\epsilon(1)=0,\quad \epsilon(x)=1.$$

In conclusion, any Frobenius algebra whose corresponding TQFT can be used to defind a link homology is realized by the Frobenius algebra $A_5$. In other words, any Khovanov-type homology can be recovered by the link homology induced by $A_5$.

\subsection{A necessary condition for Reidemeister move invariance}\label{A necessary condition for Reidemeister-II invariance}
In this subsection, we give a necessary condition to make such homology invariant under the second Reidemeister move, which is depicted on left side of Figure~\ref{fig:Reidemeister-II move and examples}.

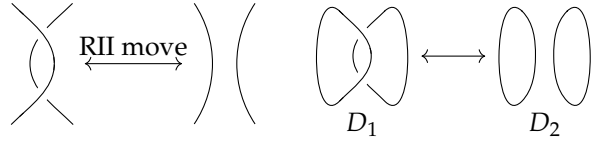
\begin{figure}[htbp]
  \centering 
  \begin{tikzpicture}[scale=0.8,use Hobby shortcut,baseline=0]
    \begin{knot}[clip width=3.7,]
    \strand (-0.5,0.5)..++(0.01,-0.01)..(0.2,-0.5)..++(-0.7,-1)..++(-0.01,-0.01);
    \strand (0.5,0.5)..++(-0.01,-0.01)..(-0.2,-0.5)..++(0.7,-1)..++(0.01,-0.01);
    \end{knot} 
    \draw[<->] (0.7,-0.5)--(2.3,-0.5);
    \node at (1.5,-0.2) {RII move};
    \draw (2.5,0.5)..(2.8,-0.5)..(2.5,-1.5) (3.5,0.5)..(3.2,-0.5)..(3.5,-1.5); 
  \end{tikzpicture}
  \quad \quad 
  \begin{tikzpicture}[scale=0.6,use Hobby shortcut, baseline=0]
    \begin{knot}[clip width=3.7,]
    \strand (-0.5,0.5)..++(0.01,-0.01)..(0.2,-0.5)..++(-0.7,-1)..++(-0.01,-0.01)..(-1,-0.5)..(-0.5,0.5)..++(0.01,-0.01);
    \strand (0.5,0.5)..++(-0.01,-0.01)..(-0.2,-0.5)..++(0.7,-1)..++(0.01,-0.01)..(1,-0.5)..(0.5,0.5)..++(-0.01,-0.01);
    \end{knot} 
    \draw[<->] (1.3,-0.5)--(2.7,-0.5);
    \draw (3.5,0.5)..++(0.01,-0.01)..(3.8,-0.5)..(3.5,-1.5)..++(-0.01,-0.01)..(3,-0.5)..(3.5,0.5)..++(0.01,-0.01) (4.5,0.5)..++(-0.01,-0.01)..(4.2,-0.5)..(4.5,-1.5)..++(0.01,-0.01)..(5,-0.5)..(4.5,0.5)..++(-0.01,-0.01); 
    \node at (0,-2) {$D_1$};
    \node at (4,-2) {$D_2$};
  \end{tikzpicture}
  \caption{Reidemeister-II move and examples}
  \label{fig:Reidemeister-II move and examples}
\end{figure}

Consider the two link diagrams $D_1, D_2$ on the right side of Figure~\ref{fig:Reidemeister-II move and examples}, both of which represent a trivial 2-component link. Fix an almost 2D-TQFT $\mathcal{F}^a$ with $\mathcal{F}^a(S^1)=A$. If the homology of this almost 2D-TQFT is invariant under the second Reidemeister move, then we should have $H(D_1;\mathcal{F}^a)\cong H(D_2;\mathcal{F}^a)$ up to a degree shift.
 
\begin{figure}[htbp]
  \centering
  \begin{tikzcd}
    &             &                                               &  & A\otimes A \arrow[rrd, "-m"] &  &             &   \\
C(D_1): & 0 \arrow[r] & A \arrow[rru, "\Delta"] \arrow[rrd, "\Delta"] &  & \oplus                       &  & A \arrow[r] & 0, \\
    &             &                                               &  & A\otimes A \arrow[rru, "m"]  &  &             &  
  \end{tikzcd}
  \caption{$C(D_1)$}
  \label{fig:$C(D_1)$}
\end{figure}

As $D_2$ is the disjoint union of two circles, its homology is $A\otimes A$, and the complex of $D_1$ is given in Figure \ref{fig:$C(D_1)$}. Denote $r=\mathrm{dim}A$, then the Euler characteristic of $C(D_1)$, $\chi(C(D_1))=r-2r^2+r=2r(1-r)$ should be equal or opposite to the dimension of $H(D_2)\cong A\otimes A$, which is $r^2$. Since $r\in\mathbb{N}$, we have $r=2$, and the homology group survive only in $H^1(D_1)\cong A\otimes A$. This tells us that the comultiplication $\Delta$ is injective and the multiplication $m$ is surjective. According to Theorem \ref{theorem1.1}, we obtain the following result.

\begin{lemma}\label{RII-lemma}
If the diagram homology $H(D; \mathcal{F}^a)$ induced by an almost 2D-TQFT $\mathcal{F}^a$ is invariant under the second Reidemeister move, then nearly Frobenius algebra corresponding to $\mathcal{F}^a$ is a Frobenius algebra.
\end{lemma}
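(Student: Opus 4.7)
The plan is to exhibit two diagrams of the trivial 2-component link that are related by a single RII move, and extract strong algebraic constraints from the requirement that their induced homology groups agree (up to a degree shift). Concretely, I would take $D_1$ and $D_2$ as in Figure~\ref{fig:Reidemeister-II move and examples}: $D_2$ is a pair of disjoint unknots so that $H(D_2;\mathcal{F}^a)\cong A\otimes A$ is concentrated in a single homological degree, while $D_1$ carries the two extra crossings produced by an RII move. Thus on the $D_2$ side the cube is trivial and the Khovanov-type chain complex collapses, which is the reason $D_2$ gives a clean target to match against.

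Next I would write out the cube for $D_1$ explicitly. The four resolutions produce, up to the overall degree shift, the complex
\begin{equation*}
0 \longrightarrow A \xrightarrow{(\Delta,\Delta)} (A\otimes A)\oplus (A\otimes A) \xrightarrow{(-m,m)} A \longrightarrow 0,
\end{equation*}
displayed in Figure~\ref{fig:$C(D_1)$}. A direct identification of $\ker$ and $\mathrm{im}$ at each spot shows that $H^0(D_1;\mathcal{F}^a)=\ker\Delta$, that $H^2(D_1;\mathcal{F}^a)=A/\mathrm{im}\,m$, and that the middle cohomology is determined by the other two by the rank--nullity relation once the dimensions are known.

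Assuming RII invariance, the total rank of $H^\ast(D_1;\mathcal{F}^a)$ must equal that of $H^\ast(D_2;\mathcal{F}^a)=A\otimes A$, and in particular the Euler characteristics agree up to sign. Setting $r=\mathrm{rank}_R A$, this is the equation $|r-2r^2+r|=r^2$, whose only positive integer solution is $r=2$; this pins down the rank of the underlying module. With $r=2$ fixed, matching $H^\ast(D_1;\mathcal{F}^a)$ with $A\otimes A$ (which is concentrated in a single degree) forces the outer cohomologies of $C(D_1;\mathcal{F}^a)$ to vanish. Hence $\ker\Delta=0$ and $A/\mathrm{im}\,m=0$, that is, $\Delta$ is injective and $m$ is surjective.

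Finally I would invoke Theorem~\ref{theorem1.1}: a $2$-dimensional nearly Frobenius algebra over a PID with surjective multiplication and injective comultiplication is a Frobenius algebra. This promotes the multiplication to admit a unit and (via the dual argument used in the proof of Theorem~\ref{theorem1.1}) the comultiplication to admit a counit, completing the reduction. The only delicate point I foresee is the Euler-characteristic step: one must verify that the degree shift induced by the RII move does not allow a cancellation that would obstruct the rank conclusion, but since $H(D_2;\mathcal{F}^a)$ sits in a single homological degree, the shift simply relocates it and the rank equality is forced as above.
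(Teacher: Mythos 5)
Your proposal is correct and follows essentially the same route as the paper: compare $C(D_1)$ with $H(D_2)\cong A\otimes A$, use the Euler characteristic identity $|2r(1-r)|=r^2$ to force $r=2$, conclude the homology is concentrated in degree $1$ so that $\Delta$ is injective and $m$ is surjective, and then apply Theorem~\ref{theorem1.1}. Your explicit identification of $H^0=\ker\Delta$ and $H^2=A/\mathrm{im}\,m$ and your remark about the degree shift are just slightly more detailed versions of the paper's argument.
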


\begin{remark}
The readers may wonder why we only consider the second Reidemeister move rather than the first and third ones. Actually, by considering the knot diagram which contains only one crossing point one can also induce that $\mathrm{dim}A=2$. By considering the knot diagram with only one positive crossing and the knot diagram with only one negative crossing, one can induce that the multiplication is surjective and the comultiplication is injective, respectively. However, if we wonder whether we can define a framed link homology invariant, i.e. the invariance under the first Reidemeister move is not necessary, we still need to consider the second Reidemeister move. So, even if we just want $H(D; \mathcal{F}^a)$ to be a framed link invariant (e.g. the Kauffman bracket), Lemma \ref{RII-lemma} still holds.
\end{remark}

\begin{theorem}
Given an almost 2D-TQFT $\mathcal{F}^a$ over a PID, then $\mathcal{F}^a$ induces a link homology if and only if the nearly Frobenius algebra corresponding to $\mathcal{F}^a$ is a Frobenius algebra realized by $A_5=\mathbb{Z}[h, t][x]/(x^2-hx-t)$.
\end{theorem}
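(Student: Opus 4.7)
The plan is to split the biconditional into its two implications, with the forward direction being the substantive one. For the forward direction, suppose the almost 2D-TQFT $\mathcal{F}^a$ induces a link homology, so $H(D;\mathcal{F}^a)$ is in particular invariant under the second Reidemeister move. I would first reproduce the Euler-characteristic computation from Section~\ref{A necessary condition for Reidemeister-II invariance} in the PID setting, using ranks of finitely generated free $R$-modules in place of vector-space dimensions: comparing $C(D_1;\mathcal{F}^a)$ with $H(D_2;\mathcal{F}^a)\cong A\otimes A$ forces $\operatorname{rank}_R A=2$, the comultiplication $\Delta$ to be injective, and the multiplication $m$ to be surjective. Lemma~\ref{RII-lemma} (which itself rests on Theorem~\ref{theorem1.1}) then upgrades the nearly Frobenius algebra $(A,m,\Delta)$ into a genuine Frobenius algebra $(A,m,\Delta,u,\epsilon)$.

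The second step invokes Khovanov's universal theory recalled above. Given a rank-$2$ Frobenius algebra $(A,m,\Delta,u,\epsilon)$ over the PID $R$, pick a splitting element $x'\in A$ complementary to the unit and apply the universal property of $A_4$ to obtain a classifying homomorphism $A_4\to A$ sending $x\mapsto x'$, exhibiting $A$ as a base change of $A_4$. Twisting by the invertible element $f+ex\in A_4$, which leaves the induced diagram homology unchanged by \cite[Proposition 3]{Kho2006}, transports this realisation onto the simpler algebra $A_5=\mathbb{Z}[h,t][x]/(x^2-hx-t)$. The Frobenius algebra attached to $\mathcal{F}^a$ is therefore a base change of $A_5$, as claimed.

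For the backward direction, suppose the nearly Frobenius algebra is a Frobenius algebra realised by $A_5$. Then the associated TQFT coincides, after base change, with Khovanov's universal theory of \cite{Kho2006}, whose induced chain complex is invariant under all three Reidemeister moves. Restriction to the subcategory $\textbf{N2Cob}$ gives the almost 2D-TQFT $\mathcal{F}^a$, and its induced homology inherits the same Reidemeister invariance, so $\mathcal{F}^a$ indeed induces a link homology.

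The main obstacle will be the first step, namely verifying that the rank-counting argument of Section~\ref{A necessary condition for Reidemeister-II invariance} passes from field coefficients to PID coefficients, and checking that Khovanov's classification (originally formulated essentially over $\mathbb{Z}$) applies intact over a general PID $R$. The latter point is essentially formal: the universal property of $A_4$ is defined by sending the generator $x$ to any chosen splitting element and is valid over any commutative ring, so the twist by $f+ex$ may be performed verbatim over $R$. The former relies on the hypothesis that the chain modules $\mathcal{F}^a(\Gamma_s(D))\cong A^{\otimes |\Gamma_s|}$ are finitely generated free $R$-modules of rank $(\operatorname{rank}_R A)^{|\Gamma_s|}$, which makes the Euler characteristic argument legitimate and forces $\operatorname{rank}_R A=2$ just as in the field case.
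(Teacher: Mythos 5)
Your proposal is correct and follows essentially the same route as the paper: the forward direction combines the Reidemeister-II Euler-characteristic argument (Lemma~\ref{RII-lemma}, resting on Theorem~\ref{theorem1.1}) with Khovanov's universal theory from \cite{Kho2006} to land on $A_5$, and the backward direction is immediate from \cite{Kho2006}. You simply spell out in more detail the steps the paper compresses into citations, including the (legitimate) caveat about freeness of $A$ needed for the rank count over a PID.
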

\begin{proof}
According to the result in \cite{Kho2006}, when $A$ is a Frobenius algebra realized by $A_5$, $\mathcal{F}^a$ does induces a link homology.

Conversely, when $H(D; \mathcal{F}^a)$ defines a link homology theory, it is of course preserved by the second Reidemeister move. Now Theorem \ref{theorem1.1} and Lemma \ref{RII-lemma} tell us that such an almost 2D-TQFT is indeed a $2$D-TQFT. In this case, according to \cite{Kho2006} again, the corresponding nearly Frobenius algebra is a Frobenius algebra realized by $A_5$.
\end{proof}

This theorem tells us that in PID coefficient case, even if the unit and counit of the Frobenius algebra are not used in the construction of Khovanov-type homology, if it defines a link invariant, such maps always exist.

\end{document}